\renewenvironment{proof}[1][Proof]{\textbf{#1.} }
{\ \rule{0.5em}{0.5em}}
\DeclareMathOperator{\ad}{ad}
\DeclareMathOperator{\diag}{diag}
\DeclareMathOperator{\Ric}{Ric}
\DeclareMathOperator{\Lin}{Lin}
\DeclareMathOperator{\End}{End}
\DeclareMathOperator{\Der}{Der}
\DeclareMathOperator{\InnDer}{InnDer}
\DeclareMathOperator{\trace}{trace}
\newtheorem{theorem}{Theorem}
\newtheorem{pred}{Proposition}
\newtheorem{lem}{Lemma}
\newtheorem{cor}{Corollary}
\newtheorem{claim}{Claim}
\newtheorem{remark}{Remark}
\begin{document}

\title[Negative eigenvalues of the Ricci operator \dots ]
{Negative eigenvalues of the Ricci operator \\ of solvable metric Lie algebras}

\author{Yu.~G.~Nikonorov}

\address{Yu.G. Nikonorov \newline
South Mathematical Institute of \newline
Vladikavkaz Scientific Centre of \newline
the Russian Academy of Sciences \newline
Vladikavkaz, Markus st. 22, \newline
362027, RUSSIA}
\email{nikonorov2006@mail.ru}

\begin{abstract}
In this paper we get a necessary and sufficient condition for the Ricci operator of a solvable metric Lie algebra
to have at least two negative eigenvalues.
In particular, this condition implies that the Ricci operator of every non-unimodular
solvable metric Lie algebra or every non-abelian nilpotent
metric Lie algebra has this property.

\vspace{2mm} \noindent Key word and phrases: left-invariant Riemannian metrics, Lie groups, metric Lie algebras,
Ricci operator, eigenvalues of the Ricci operator, Ricci curvature.

\vspace{2mm}

\noindent
2010 Mathematical Subject Classification: 53C30, 17B30.
\end{abstract}

\thanks{The project was supported in part by the
State Maintenance Program for the Leading Scientific Schools of
the Russian Federation (grant NSh-921.2012.1) and by Federal Target
Grant ``Scientific and educational personnel of innovative
Russia'' for 2009-2013 (agreement no. 8206, application no. 2012-1.1-12-000-1003-014).}

\maketitle

\section*{Introduction and the main results}

Various restrictions on the curvature of a Riemannian manifold
allow to obtain some interesting information on its geometric and topological structures.
One of the important characteristics of the curvature is the Ricci curvature,
that is confirmed by numerous researches of mathematicians and physicists \cite{Bes}.
On the other hand, it should be noted that
there are many unsolved problems connected with the Ricci curvature,
even in the case of homogeneous Riemannian manifolds (see e.~g. the survey \cite{NikRodSl}
for a more detailed information on this subject).

One of this problem is the following: to classify all possible signatures of the Ricci operators
of invariant Riemannian metrics on a given homogeneous space.
This problem seems to be very hard in general. Now, it is solved only for some very special cases.
There are some important results in this direction \cite{AlKim,Bes,DM}.
For instance, this problem is completely solved
for all homogeneous spaces of dimension $\leq 4$ (see \cite{KrNk1,KrNk2} and references therein).
In particular, J.~Milnor classifies in \cite{Mi} all possible signatures of the Ricci operators
of left-invariant Riemannian metrics on all Lie groups of dimension $\leq 3$,
the same result for Lie groups of dimension $4$ was obtained by
A.G.~Kremlyov and Yu.G.~Nikonorov in \cite{KrNk1,KrNk2}
(some results in this direction are obtained also in the paper \cite{Chen} of D.~Chen).

For other dimensions we have only partial results. It is necessary to mention
the paper \cite{DM} of I.~Dotti-Miatello, where Ricci signatures of left-invariant
Riemannian metrics on two-step solvable unimodular Lie groups are determined,
and the paper \cite{Krem} of A.G.~Kremlyov, where the same problem is solved for
nilpotent five-dimensional Lie groups.

In this paper we restrict our attention to solvable Lie groups with left-invariant Riemannian metrics.
It is shown in the papers \cite{Jensen,Mi} that the scalar curvature of
every non-flat left-invariant Riemannian metric on a given solvable
Lie group is negative, therefore, the Ricci operator of this metric has at least
one negative eigenvalue.
The main problem of this paper is to determine, {\it whether the Ricci operator
of a given left-invariant metric has at least two negative eigenvalues.}

It is convenient to study
left-invariant Riemannian metrics on Lie groups in terms of metric Lie algebras
(i.~e. Lie algebras supplied with inner products)
\cite{Al5, Bes, NikRodSl}.
Indeed, let $G$ be a Lie group with the Lie algebra $\mathfrak{g}$.
Then every inner product $(\cdot ,\cdot)$ on $\mathfrak{g}$ uniquely
determines a left-invariant Riemannian metric $\rho$ on $G$, and
vice versa (see e.~g. 7.24 in \cite{Bes}). As usual, we denote by $[\mathfrak{s},\mathfrak{s}]$
the derived algebra of a Lie algebra~$\mathfrak{s}$. For every solvable Lie algebra $\mathfrak{s}$,
$[\mathfrak{s},\mathfrak{s}]$ is a {\it nilpotent ideal} of~$\mathfrak{s}$ and
$[\mathfrak{s},\mathfrak{s}]\neq \mathfrak{s}$.

\smallskip

Recall, that an operator $A$ (acting on a given Euclidean space) is called {\it normal}, if it commutes
with its adjoint $A'$.
The main result of this paper is the following

\begin{theorem}\label{VRazAlg}
Let $(\mathfrak{s},Q)$ be a solvable metric Lie algebra, $\mathfrak{n}=[\mathfrak{s},\mathfrak{s}]$, $\mathfrak{a}$ be
a $Q$-orthogonal complement to $\mathfrak{n}$ in $\mathfrak{s}$.
Then one of the following mutually exclusive assertions holds:

\begin{enumerate}

\item The ideal $\mathfrak{n}$ is commutative, $\mathfrak{a}$ is  a commutative subalgebra of $\mathfrak{s}$,
and for every $X\in \mathfrak{a}$
the operator $\ad(X)|_{\mathfrak{n}}$ is skew-symmetric with respect to $Q$
(in this case the Ricci operator of $(\mathfrak{s},Q)$ is zero);
\item The ideal $\mathfrak{n}$ is commutative, $\mathfrak{a}$ is  a commutative subalgebra of $\mathfrak{s}$,
and for every $X\in \mathfrak{a}$
the operator $\ad(X)|_{\mathfrak{n}}$ is trace-free and normal with respect to $Q$, but the subspace
$$
\mathfrak{b}=\{X\in \mathfrak{a}\,|\, \ad(X)|_{\mathfrak{n}}\,\, \mbox{is skew-symmetric with respect to}\,\, Q\}
$$
has codimension $1$ in $\mathfrak{a}$
(in this case the Ricci operator of $(\mathfrak{s},Q)$
has only one negative eigenvalue, while all other eigenvalues are zero);
\item The Ricci operator of the metric Lie algebra $(\mathfrak{s},Q)$ has at least two negative eigenvalues.
\end{enumerate}
\end{theorem}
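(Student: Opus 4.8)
The approach is to match the three alternatives with the three possible shapes of the spectrum of $\Ric$ — identically zero; exactly one negative eigenvalue and all others zero; at least two negative eigenvalues — and to show that the structural descriptions in (1) and (2) characterise the first two shapes, so that (3) absorbs everything else. The only tool is the standard formula for the Ricci operator of a metric Lie algebra: for a $Q$-orthonormal basis $\{e_i\}$ and the mean curvature vector $H\in\mathfrak{a}$ (given by $Q(H,X)=\trace(\ad X)$),
\[
Q(\Ric X,X)=-\tfrac12\sum_i|[X,e_i]|^2-\tfrac12\trace\bigl((\ad X)^2\bigr)+\tfrac14\sum_{i,j}Q([e_i,e_j],X)^2-Q([H,X],X).
\]

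Step 1 (restriction to $\mathfrak{a}$). For $X\in\mathfrak{a}$ every $[e_i,e_j]$ and $[H,X]$ lies in $\mathfrak{n}=[\mathfrak{s},\mathfrak{s}]$, which is $Q$-orthogonal to $\mathfrak{a}$; writing $D=\ad(X)|_{\mathfrak{n}}$ and splitting it into symmetric and skew parts $S,A$, the identity $\tfrac12|D|^2+\tfrac12\trace(D^2)=|S|^2$ gives $Q(\Ric X,X)=-\tfrac12\sum_k|[X,a_k]|^2-|S(X)|^2\le0$ for $X\in\mathfrak{a}$, with equality exactly when $[X,\mathfrak{a}]=0$ and $\ad(X)|_{\mathfrak{n}}$ is skew-symmetric. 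By the min--max principle $\Ric$ has at least $\dim\mathfrak{a}-\dim\mathfrak{b}'$ negative eigenvalues, where $\mathfrak{b}'=\{X\in\mathfrak{a}:[X,\mathfrak{a}]=0,\ \ad(X)|_{\mathfrak{n}}\text{ skew}\}$, so it suffices to treat $\dim\mathfrak{a}-\dim\mathfrak{b}'\le1$. Then $\{X\in\mathfrak{a}:[X,\mathfrak{a}]=0\}$ has codimension $\le1$ in $\mathfrak{a}$; since a codimension-one such subspace would force $[\mathfrak{a},\mathfrak{a}]=0$, we may assume $\mathfrak{a}$ commutative, whence $\mathfrak{b}'=\mathfrak{b}$. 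If $\mathfrak{s}$ is nilpotent this already settles everything: then $H=0$, a skew nilpotent operator vanishes, so $\mathfrak{b}=\mathfrak{a}\cap Z(\mathfrak{s})$ and $\dim\mathfrak{a}-\dim\mathfrak{b}=\dim\bigl(\mathfrak{s}/(Z(\mathfrak{s})+[\mathfrak{s},\mathfrak{s}])\bigr)$, which is $0$ for $\mathfrak{s}$ abelian (giving $\Ric=0$, i.e.\ (1)) and is $\ge2$ for $\mathfrak{s}$ non-abelian nilpotent.

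Step 2 (reduction to $\dim\mathfrak{a}=1$, and the case $\mathfrak{n}$ commutative). Polarising the formula of Step 1 shows $Q(\Ric b,\cdot)\equiv0$ for $b\in\mathfrak{b}$, so $\mathfrak{b}\subseteq\ker\Ric$; moreover $\mathfrak{b}^{\perp}$ is a (solvable) subalgebra, and the extra terms obtained by summing the Ricci formula over a $Q$-orthonormal basis of $\mathfrak{b}$ cancel — precisely because $\mathfrak{b}$ acts on $\mathfrak{n}$ by skew operators and $H\perp\mathfrak{b}$ — so the Ricci quadratic form of $(\mathfrak{s},Q)$ restricted to $\mathfrak{b}^{\perp}$ equals the intrinsic Ricci form of $(\mathfrak{b}^{\perp},Q)$, whence $\Ric_{\mathfrak{s}}=\Ric_{\mathfrak{b}^{\perp}}\oplus0$. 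The numbers of negative (resp.\ zero) eigenvalues of $\Ric_{\mathfrak{s}}$ and $\Ric_{\mathfrak{b}^{\perp}}$ agree (up to $\dim\mathfrak{b}$ for the zeros), $\mathfrak{b}^{\perp}$ again falls under Step 1, and each reduction strictly lowers the dimension unless we are already done; iterating reduces to $\mathfrak{b}=0$, which with $\dim\mathfrak{a}-\dim\mathfrak{b}\le1$ and $\dim\mathfrak{a}\ge1$ forces $\dim\mathfrak{a}=1$, say $\mathfrak{a}=\mathbb{R}X_0$ with $D:=\ad(X_0)|_{\mathfrak{n}}$ not skew. If $\mathfrak{n}$ is commutative, the Ricci formula gives $Q(\Ric X_0,X_0)=-|S(X_0)|^2<0$, no $\mathfrak{a}$--$\mathfrak{n}$ cross terms, $\Ric|_{\mathfrak{n}}=\tfrac12(DD^{t}-D^{t}D)-S\bigl(\ad(H)|_{\mathfrak{n}}\bigr)$, and $\trace(\Ric|_{\mathfrak{n}})=-|H|^2$; hence $\Ric$ acquires a second negative eigenvalue unless $H=0$ (i.e.\ $D$ trace-free) and $DD^{t}=D^{t}D$ (i.e.\ $D$ normal), in which case $\Ric|_{\mathfrak{n}}=0$ and we are in alternative (2) with exactly one negative eigenvalue. (Without the reduction one meets instead a commuting family $D_k$ with $\sum_k(D_k^{t}D_k-D_kD_k^{t})=0$; taking a simultaneous Schur basis over $\mathbb{C}$, the trace of the compression of this sum to the first $j$ coordinates equals $-\sum_k\sum_{i\le j<\ell}|(U_k)_{i\ell}|^2$, where $U_k$ is the strictly upper part of $D_k$, which forces each $D_k$ normal.)

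Step 3 (the remaining case — the expected main obstacle). There remains $\mathfrak{s}=\mathbb{R}X_0\ltimes\mathfrak{n}$ with $\mathfrak{n}$ non-abelian nilpotent and $\ad(X_0)|_{\mathfrak{n}}$ not skew; this is neither (1) (since $\mathfrak{b}=0\ne\mathfrak{a}$) nor (2) (since $\mathfrak{n}$ is non-commutative), and one must show $\Ric$ has at least two negative eigenvalues. We have $Q(\Ric X_0,X_0)=-|S(X_0)|^2<0$; the natural home of a second negative direction is $([\mathfrak{n},\mathfrak{n}])^{\perp}\cap\mathfrak{n}$, on which the intrinsic Ricci form of $\mathfrak{n}$ is already $\le0$ (with kernel $Z(\mathfrak{n})$, and $\dim\bigl(\mathfrak{n}/(Z(\mathfrak{n})+[\mathfrak{n},\mathfrak{n}])\bigr)\ge2$), but for $\mathfrak{s}$ it is corrected by terms involving the ``$([\mathfrak{n},\mathfrak{n}])^{\perp}$-part'' of $\ad(X_0)$ (well defined since $\ad(X_0)$, a derivation, preserves $[\mathfrak{n},\mathfrak{n}]$), together with cross terms $Q(\Ric X_0,Y)=-\trace\bigl(S(X_0)\ad_{\mathfrak{n}}(Y)\bigr)$. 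Controlling these is the technical heart: most naturally via the Riemannian submersion $\mathfrak{s}\to\mathfrak{s}/[\mathfrak{n},\mathfrak{n}]$ (legitimate since $[\mathfrak{n},\mathfrak{n}]$ is an ideal of $\mathfrak{s}$), comparing with the already-settled two-step quotient by an O'Neill-type inequality and estimating the fibre contribution, or alternatively by an explicit estimate producing a $2$-plane on which $\Ric<0$. Once Step 3 is done, comparison with Steps 1--2 shows the three alternatives are exhaustive and mutually exclusive (in particular $\Ric$ is never of signature with one negative and at least one positive eigenvalue).
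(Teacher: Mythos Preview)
Your overall architecture matches the paper's: split by the codimension of $\widetilde{\mathfrak{a}}=\mathfrak{b}'$ in $\mathfrak{a}$, use negativity of $\Ric$ on $\mathfrak{a}$ (Milnor; the paper's Proposition~\ref{kossim}) together with interlacing to dispose of codimension $\geq 2$, and reduce the remaining cases to a one-dimensional complement by passing to the subalgebra $\mathfrak{b}^{\perp}$ (the paper does this in its Case~2 with $\widehat{\mathfrak{s}}=\Lin(\mathfrak{n},f_1)$). Your treatment of $\mathfrak{n}$ commutative in Step~2 is essentially the paper's Claim~\ref{proofr20}, and your direct handling of non-abelian nilpotent $\mathfrak{s}$ via $\dim\bigl(\mathfrak{s}/(Z(\mathfrak{s})+[\mathfrak{s},\mathfrak{s}])\bigr)\geq 2$ is a pleasant alternative to the paper's Lemma~\ref{nilp} (your ``$=$'' should be ``$\geq$'', but that is the direction you need).

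The genuine gap is Step~3: you correctly isolate the case $\mathfrak{s}=\mathbb{R}X_0\ltimes\mathfrak{n}$ with $\mathfrak{n}$ non-abelian and $A:=\ad(X_0)|_{\mathfrak{n}}$ not skew as the crux, but you do not prove it. Neither suggested route works as stated. The submersion $\mathfrak{s}\to\mathfrak{s}/[\mathfrak{n},\mathfrak{n}]$ lands in an algebra with abelian derived ideal which may fall under alternative~(2) (only one negative eigenvalue), and O'Neill controls sectional curvature, not Ricci signature, so there is no mechanism to lift a second negative direction. The ``explicit $2$-plane'' idea is what the paper actually carries out, but the estimate is delicate because the cross term $R_2$ (your $-\trace(S(X_0)\ad_{\mathfrak{n}}(\cdot))$) can be large relative to $r=\langle A^s,A^s\rangle$ and $\trace(\Ric^{\mathfrak{n}})$. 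The paper's argument (Claim~\ref{proofr2n0}) rests on three ingredients you have not identified: (i) the orthogonal projection $\widetilde{A}=P_{\mathrm{inner}}(A)$ onto $\InnDer(\mathfrak{n})$ together with the inequality $\langle A^s,A^s\rangle\geq\tfrac12\langle\widetilde{A},\widetilde{A}\rangle$, equality forcing $A-\widetilde{A}$ skew (Proposition~\ref{vspom1}); (ii) a Cauchy--Schwarz bound on $\trace(R_2R_2')$ against $\langle\widetilde{A},\widetilde{A}\rangle\cdot\langle D_1,D_1\rangle$ combined with $\sum_{i,j,k}(C_{ij}^k)^2\geq 2\langle D_1,D_1\rangle$, giving $r\cdot\trace(\Ric^{\mathfrak{n}})+\trace(R_2R_2')\leq 0$; and (iii) in the borderline equality case, the inequality $\trace(\Ric^{\mathfrak{n}}\cdot[A,A'])\geq 0$ for any derivation $A$ of a nilpotent metric Lie algebra, with equality forcing $A'\in\Der(\mathfrak{n})$ (Proposition~\ref{riccider}), after which a contradiction is extracted from the lower-central-series filtration (Lemmas~\ref{vspom1n} and~\ref{vspom2}). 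Without these, or a genuine substitute, Step~3 is not established.
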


\begin{remark}\label{calcul}
The structure of the Ricci operator in the items (1) and (2) of the above theorem easily follows from
the formula (\ref{nnteor}).  Metric Lie algebras with zero Ricci curvature are flat
(i.~e. have zero sectional curvature) by the well known result
of D.V.~Alekseevski\v\i~and B.N.~Kimmel'fel'd~\cite{AlKim}. The case of flat metric Lie algebras
has been studied in Theorem 1.5 of the paper~\cite{Mi} by J.~Milnor.
\end{remark}

The authors of the paper \cite{KrNk2} proved (in particular) that the Ricci operator
of every non-unimodular solvable metric Lie algebra of dimension $\leq 4$ has at least two negative
eigenvalues. Moreover, it has been conjectured in \cite{KrNk2}, that the Ricci operator of
an arbitrary non-unimodular solvable metric Lie algebra
have the same property. This conjecture was confirmed for
all non-unimodular solvable metric Lie algebras of dimension $\leq 6$ in \cite{Cheb1},
for all completely solvable Lie algebras in~\cite{NikCheb}, and
for all Lie algebras with six-dimensional two-step nilpotent derived algebras in~\cite{Abiev}.
\smallskip

Obviously, the cases (1) and (2) of Theorem \ref{VRazAlg}
are impossible for non-unimodular Lie algebras. Hence, Theorem \ref {VRazAlg} implies
immediately a confirmation  of the above-mentioned conjecture:

\begin{theorem}
\label{conject}
Let $\mathfrak{s}$ be a non-unimodular solvable Lie algebra.
Then for every inner product $Q$ on $\mathfrak{s}$, the Ricci operator of the metric
Lie algebra $(\mathfrak{s}, Q)$ has at least two negative eigenvalues.
\end{theorem}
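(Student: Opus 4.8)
The plan is to derive Theorem~\ref{conject} as an immediate consequence of Theorem~\ref{VRazAlg}, by showing that its alternatives (1) and (2) force $\mathfrak{s}$ to be unimodular, so that a non-unimodular $\mathfrak{s}$ must fall under alternative (3) for every inner product. First I would recall the unimodularity criterion: $\mathfrak{s}$ is unimodular exactly when $\trace(\ad(X))=0$ for all $X\in\mathfrak{s}$. Since $\mathfrak{n}=[\mathfrak{s},\mathfrak{s}]$ is an ideal containing the image of every $\ad(X)$, in a basis adapted to the decomposition $\mathfrak{s}=\mathfrak{n}\oplus\mathfrak{a}$ the matrix of $\ad(X)$ is block upper-triangular with zero $\mathfrak{a}$-block; hence $\trace(\ad(X))=\trace(\ad(X)|_{\mathfrak{n}})$. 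As $\mathfrak{n}$ is nilpotent, $\ad(X)|_{\mathfrak{n}}$ is nilpotent, and thus trace-free, whenever $X\in\mathfrak{n}$. Consequently $\mathfrak{s}$ is unimodular if and only if $\trace(\ad(X)|_{\mathfrak{n}})=0$ for all $X\in\mathfrak{a}$.

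Next I would inspect the two special cases of Theorem~\ref{VRazAlg}. In case (1), for every $X\in\mathfrak{a}$ the operator $\ad(X)|_{\mathfrak{n}}$ is $Q$-skew-symmetric, hence has zero trace; in case (2), the operators $\ad(X)|_{\mathfrak{n}}$ are trace-free by hypothesis. By the criterion above, in either case $\mathfrak{s}$ is unimodular. Therefore, if $\mathfrak{s}$ is non-unimodular, neither (1) nor (2) can hold for any inner product $Q$ on $\mathfrak{s}$; by the mutual exclusivity in Theorem~\ref{VRazAlg}, alternative (3) must hold, i.e. the Ricci operator of $(\mathfrak{s},Q)$ has at least two negative eigenvalues. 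This finishes the proof.

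I do not expect any genuine obstacle here: once Theorem~\ref{VRazAlg} is available, the argument is pure bookkeeping. The only point that needs a little care is the elementary reduction of the unimodularity condition to the vanishing of the traces of $\ad(X)|_{\mathfrak{n}}$ for $X$ ranging over the complement $\mathfrak{a}$, which rests on $\mathfrak{n}$ being a nilpotent ideal that absorbs all brackets. The substance of the result is entirely contained in Theorem~\ref{VRazAlg}.
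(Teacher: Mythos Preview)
Your proposal is correct and follows exactly the route the paper takes: the paper simply observes that cases (1) and (2) of Theorem~\ref{VRazAlg} force unimodularity (since in both cases $\ad(X)|_{\mathfrak{n}}$ is trace-free for all $X\in\mathfrak{a}$), so a non-unimodular $\mathfrak{s}$ must fall under case (3). You have merely spelled out the elementary justification that the paper leaves implicit.
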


Using Theorem \ref{VRazAlg}, we can get the following result (see Remark \ref{last} in the last section).

\begin{theorem}[\cite{NikCheb}]\label{NilpAlg}
Let $\mathfrak{s}$ be a non-commutative nilpotent Lie algebra.
Then for every inner product $Q$ on $\mathfrak{s}$ the Ricci operator of the metric
Lie algebra $(\mathfrak{s}, Q)$ has at least two negative eigenvalues.
\end{theorem}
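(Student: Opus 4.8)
The plan is to derive Theorem \ref{NilpAlg} as a corollary of Theorem \ref{VRazAlg}, exactly as the phrase ``see Remark \ref{last}'' suggests. Let $\mathfrak{s}$ be a non-commutative nilpotent Lie algebra with inner product $Q$, and set $\mathfrak{n}=[\mathfrak{s},\mathfrak{s}]$, with $\mathfrak{a}$ the $Q$-orthogonal complement of $\mathfrak{n}$ in $\mathfrak{s}$. Since $\mathfrak{s}$ is nilpotent and non-commutative, $\mathfrak{n}$ is a nonzero proper nilpotent ideal. I would argue that the metric Lie algebra $(\mathfrak{s},Q)$ cannot fall into case (1) or case (2) of Theorem \ref{VRazAlg}; then case (3) must hold, which is precisely the assertion to be proved.

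First I would rule out case (1). In case (1) every $\ad(X)|_{\mathfrak{n}}$ for $X\in\mathfrak{a}$ is skew-symmetric with respect to $Q$, hence in particular semisimple (diagonalizable over $\mathbb{C}$) with purely imaginary eigenvalues. But $\mathfrak{s}$ is nilpotent, so by Engel's theorem every operator $\ad(X)$ is nilpotent, and thus $\ad(X)|_{\mathfrak{n}}$ is nilpotent; a nilpotent skew-symmetric operator is zero. Therefore in case (1) we would have $\ad(X)|_{\mathfrak{n}}=0$ for all $X\in\mathfrak{a}$, and since $\mathfrak{n}=[\mathfrak{s},\mathfrak{s}]$ is commutative in case (1), also $\ad(Y)|_{\mathfrak{n}}=0$ for all $Y\in\mathfrak{n}$; together with $[\mathfrak{a},\mathfrak{a}]\subseteq\mathfrak{n}$ and $\mathfrak{a}$ commutative, this forces $[\mathfrak{s},\mathfrak{s}]\subseteq\mathfrak{n}$ to be trivial, i.e. $\mathfrak{s}$ commutative, contradicting our hypothesis. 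An identical argument disposes of case (2): there the subspace $\mathfrak{b}$ of $X\in\mathfrak{a}$ with $\ad(X)|_{\mathfrak{n}}$ skew-symmetric has codimension $1$, so $\mathfrak{b}\neq\mathfrak{a}$ and there exists $X_0\in\mathfrak{a}\setminus\mathfrak{b}$ with $\ad(X_0)|_{\mathfrak{n}}$ trace-free and normal but not skew-symmetric; a normal operator is semisimple, a nilpotent semisimple operator is zero, and a zero operator is (vacuously) skew-symmetric, contradicting $X_0\notin\mathfrak{b}$. Hence neither (1) nor (2) can occur, so by the dichotomy in Theorem \ref{VRazAlg} case (3) holds and the Ricci operator of $(\mathfrak{s},Q)$ has at least two negative eigenvalues.

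I do not expect a serious obstacle here: the whole content is already packed into Theorem \ref{VRazAlg}, and the deduction rests only on the elementary fact that a nilpotent operator which is normal (in particular skew-symmetric) with respect to an inner product must vanish, combined with Engel's theorem. The only point requiring a little care is to check that the structural constraints imposed in cases (1) and (2) — commutativity of $\mathfrak{n}$ and of $\mathfrak{a}$, together with the vanishing of all the relevant $\ad(X)|_{\mathfrak{n}}$ — genuinely force $[\mathfrak{s},\mathfrak{s}]=\{0\}$, i.e. that one correctly uses $\mathfrak{s}=\mathfrak{a}\oplus\mathfrak{n}$ as a vector space together with $[\mathfrak{a},\mathfrak{a}]\subseteq[\mathfrak{s},\mathfrak{s}]=\mathfrak{n}$. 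This is routine once the skew-symmetric (resp. normal) ad-operators are shown to be zero.
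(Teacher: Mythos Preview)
Your elimination of cases (1) and (2) via ``nilpotent and normal (resp.\ skew-symmetric) implies zero'' is exactly the content of the paper's Lemma~\ref{nilp}, and the deductions that follow are correct. However, your framing introduces a circularity that the paper takes explicit pains to avoid. You propose to invoke the full trichotomy of Theorem~\ref{VRazAlg} as a black box and then exclude (1) and (2). But look at how Theorem~\ref{VRazAlg} is actually proved: in case 3b) of that proof (namely $\widetilde{\mathfrak{a}}=\mathfrak{a}$ and $\mathfrak{n}$ non-abelian), the matrix $\Ric$ is shown to have two negative eigenvalues by applying Theorem~\ref{NilpAlg} to the submatrix $\Ric^{\mathfrak{n}}$. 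So Theorem~\ref{VRazAlg} is not available to you until Theorem~\ref{NilpAlg} is already established.

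The paper resolves this by proving Theorem~\ref{NilpAlg} \emph{midstream}, after cases 1) and 2) of the proof of Theorem~\ref{VRazAlg} have been settled (these do not use Theorem~\ref{NilpAlg}) and after Lemma~\ref{nilp} has shown that for a non-abelian nilpotent $\mathfrak{s}$ the proof-cases 2a) and 3) cannot occur. Hence only proof-cases 1) and 2b) remain, and in both the Ricci operator already has at least two negative eigenvalues; this is Remark~\ref{last}. Only then is Theorem~\ref{NilpAlg} fed back into case 3b) to complete Theorem~\ref{VRazAlg}. Your argument becomes a valid proof once you rephrase it in these terms: rather than excluding the \emph{statement}-cases (1) and (2) of Theorem~\ref{VRazAlg}, exclude the \emph{proof}-cases 2a) and 3), and observe that cases 1) and 2b) are already handled independently of Theorem~\ref{NilpAlg}. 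The mathematics you wrote down is precisely what is needed; only the logical packaging needs adjustment.
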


Note that some partial cases of this theorem were obtained earlier in the paper~\cite{Krem}.

From Theorem \ref{VRazAlg} we easily get also the following two corollaries.

\begin{cor}\label{sled1}
If the Ricci operator of a solvable metric Lie algebra $(\mathfrak{s}, Q)$ has at least one positive eigenvalue, then
it has at least two negative eigenvalues.
\end{cor}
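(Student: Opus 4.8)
The plan is to obtain the corollary as an immediate consequence of the trichotomy established in Theorem \ref{VRazAlg}. First I would invoke that theorem for the given solvable metric Lie algebra $(\mathfrak{s},Q)$: exactly one of the three mutually exclusive alternatives (1), (2), (3) holds. Next I would record the sign pattern of the eigenvalues of the Ricci operator in the first two alternatives. In case (1) the Ricci operator is the zero operator, so all of its eigenvalues are zero; in case (2) it has exactly one negative eigenvalue and all of its remaining eigenvalues equal to zero. In particular, in both of these cases the Ricci operator has no positive eigenvalue.

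The final step is a contrapositive count: if the Ricci operator of $(\mathfrak{s},Q)$ has at least one positive eigenvalue, then alternatives (1) and (2) are excluded by the previous observation, and since (1), (2), (3) exhaust all possibilities, alternative (3) must hold. But alternative (3) is precisely the assertion that the Ricci operator has at least two negative eigenvalues, which is the desired conclusion. There is essentially no obstacle here, since all of the substance is packaged inside Theorem \ref{VRazAlg}; the only thing to verify is the elementary eigenvalue bookkeeping in cases (1) and (2), and this is read off directly from the statement of that theorem together with Remark \ref{calcul} (which traces the structure of the Ricci operator in those cases back to formula (\ref{nnteor})).
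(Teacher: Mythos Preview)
Your proposal is correct and matches the paper's intended approach: the corollary is stated as an immediate consequence of Theorem~\ref{VRazAlg}, and your argument---ruling out alternatives (1) and (2) because neither admits a positive eigenvalue, hence forcing alternative (3)---is exactly the easy deduction the paper has in mind.
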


\begin{cor}\label{sled2}
Let $(\mathfrak{s}, Q)$ be a solvable metric Lie algebra such that
a $Q$-orthogonal complement $\mathfrak{a}$ to
$\mathfrak{n}=[\mathfrak{s},\mathfrak{s}]$ in $\mathfrak{s}$
is not a commutative subalgebra of $\mathfrak{s}$, then
the Ricci operator of $(\mathfrak{s}, Q)$ has at least two negative eigenvalues.
\end{cor}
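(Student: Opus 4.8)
The plan is to deduce Corollary \ref{sled2} directly from Theorem \ref{VRazAlg}, with no additional computation. That theorem provides, for an arbitrary solvable metric Lie algebra $(\mathfrak{s}, Q)$, a trichotomy: exactly one of the mutually exclusive assertions (1), (2), (3) holds. The conclusion we are after --- that the Ricci operator of $(\mathfrak{s}, Q)$ has at least two negative eigenvalues --- is precisely assertion (3). So the whole task reduces to excluding assertions (1) and (2) under the stated hypothesis.

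First I would observe that both (1) and (2) in the formulation of Theorem \ref{VRazAlg} contain, as part of their conclusions, the statement that the $Q$-orthogonal complement $\mathfrak{a}$ to $\mathfrak{n} = [\mathfrak{s}, \mathfrak{s}]$ is a commutative subalgebra of $\mathfrak{s}$. The hypothesis of Corollary \ref{sled2} is exactly the negation of this, namely that $\mathfrak{a}$ is \emph{not} a commutative subalgebra. Therefore, under this hypothesis, neither (1) nor (2) can occur, and the trichotomy of Theorem \ref{VRazAlg} forces assertion (3), which is what we want.

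It is worth noting, as a minor point, that the hypothesis can be stated even more economically: since for any $X, Y \in \mathfrak{a}$ one has $[X, Y] \in [\mathfrak{s}, \mathfrak{s}] = \mathfrak{n}$ while $\mathfrak{a} \cap \mathfrak{n} = \{0\}$, the subspace $\mathfrak{a}$ is a subalgebra of $\mathfrak{s}$ if and only if it is a \emph{commutative} subalgebra. Thus ``$\mathfrak{a}$ is not a commutative subalgebra'' is the same as ``$\mathfrak{a}$ is not a subalgebra'', i.e. the same as ``$[X, Y] \neq 0$ for some $X, Y \in \mathfrak{a}$''. One could include this equivalence in the write-up, but it is not needed for the proof.

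The main obstacle here is not in the proof of the corollary at all: all the substantive content sits in Theorem \ref{VRazAlg}. Once that theorem is available, Corollary \ref{sled2} (like Corollary \ref{sled1}) is an immediate formal consequence of the trichotomy, and there is nothing further to carry out.
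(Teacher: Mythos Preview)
Your proposal is correct and matches the paper's approach exactly: the paper states that Corollary~\ref{sled2} follows easily from Theorem~\ref{VRazAlg} and gives no further argument, which is precisely your observation that alternatives (1) and (2) both require $\mathfrak{a}$ to be a commutative subalgebra, so the hypothesis forces alternative (3).
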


We hope that results of this paper will be useful for future research on solvable metric Lie algebras
(therefore, on solvable Lie groups with left-invariant Riemannian metrics), in particular,
for the study of the Ricci flow
on solvable Lie groups (see \cite{GliPay, La2011, Pay2010}).

\medskip

The structure of this paper is the following.
In the first section we recall some notations and useful facts and prove also some auxiliary results.
The second section is devoted to some convenient formulas for the Ricci operator of solvable metric Lie algebras.
In this section, we recall also some important results related to the Ricci curvature.
The third section of the paper is devoted to the proof of Theorem
\ref{VRazAlg} for one special (but very involved and important) case.
Finally, in the last section we prove Theorem~\ref{VRazAlg} in full generality.
It should be noted, that we have used Theorem~\ref{NilpAlg} for this goal,
but (for a formal reason) we have proved completely this theorem  before its using.
Hence, our presentation does not depend on the paper~\cite{NikCheb}.
\medskip

The author is indebted to Prof. V.N.~Berestovskii and Prof. Yu.A.~Nikolayevsky
for helpful discussions concerning this paper.

\section{Notations and auxiliary results}

Standard notations and classical results on Lie algebras could be find in \cite{Bour, Ja, Vinb}.

Let $\mathfrak{n}$ be a nilpotent Lie algebra of degree $p$. Consider its lower central series $\{\mathfrak{n}^k \}$,
where
$$
\mathfrak{n}^0=\mathfrak{n}, \quad \mathfrak{n}^1=[\mathfrak{n},\mathfrak{n}], \quad \dots ,\quad
\mathfrak{n}^k =[\mathfrak{n}^{k-1},\mathfrak{n}]\quad (k \geq 1),\quad \dots
$$
Then $\mathfrak{n}^p=0$ and  $\mathfrak{n}^{p-1}\neq 0$.

Let $\Der(\mathfrak{n})$ and $\InnDer(\mathfrak{n})$ be a space of derivations and a space of inner derivations
of the Lie algebra $\mathfrak{n}$ respectively. It is clear that $\InnDer(\mathfrak{n})\subset \Der(\mathfrak{n})$ and
$\InnDer(\mathfrak{n})\neq \Der(\mathfrak{n})$ since $\mathfrak{n}$ is nilpotent \cite{Ja}.

\begin{lem}\label{derstruc}
For any $A \in \Der(\mathfrak{n})$ we have $A(\mathfrak{n}^k) \subset \mathfrak{n}^k$ for every $k \geq 0$.
\end{lem}

\begin{proof}
We prove the lemma by induction. For $k=0$ we get $A(\mathfrak{n}^0)=A(\mathfrak{n})=\mathfrak{n}=\mathfrak{n}^0$.
If the lemma is true for all values of $k\leq l$ then (by properties of derivations) we have
$A(\mathfrak{n}^{l+1})=A([\mathfrak{n}^{l},\mathfrak{n}])\subset
[A(\mathfrak{n}^{l}),\mathfrak{n}]+[\mathfrak{n}^{l}, A(\mathfrak{n})]\subset
[\mathfrak{n}^{l},\mathfrak{n}]+[\mathfrak{n}^{l},\mathfrak{n}]\subset \mathfrak{n}^{l+1}$.
This proves the lemma.
\end{proof}

\begin{lem}\label{innderstruc}
For any $A \in \InnDer(\mathfrak{n})$ we have $A(\mathfrak{n}^k) \subset \mathfrak{n}^{k+1}$ for every $k \geq 0$.
\end{lem}

\begin{proof}
Since $A \in \InnDer(\mathfrak{n})$, then there is $X \in \mathfrak{n}$ such that $A=\ad (X)$.
For every $k$ we get $A(\mathfrak{n}^k)=[X, \mathfrak{n}^k] \subset [\mathfrak{n}^k, \mathfrak{n}]=\mathfrak{n}^{k+1}$.
This proves the lemma.
\end{proof}

\begin{lem}\label{vspom0}
Let  $\mathfrak{n}$ be a nilpotent Lie algebra, $\mathfrak{b}\subset \mathfrak{n}$ is an abelian subalgebra
of codimension $1$. Then $\mathfrak{b}$ is an abelian ideal in $\mathfrak{n}$.
\end{lem}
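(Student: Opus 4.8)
The plan is to reduce the statement to a single inclusion and then force it using nilpotency. Since $\mathfrak{b}$ is abelian by hypothesis, it only remains to check that $\mathfrak{b}$ is an ideal, i.e. that $[\mathfrak{n},\mathfrak{b}]\subseteq\mathfrak{b}$. First I would fix a vector $X\in\mathfrak{n}\setminus\mathfrak{b}$, so that $\mathfrak{n}=\mathfrak{b}\oplus\mathbb{R}X$ as a vector space. Because $\mathfrak{b}$ is abelian, bilinearity of the bracket gives $[\mathfrak{n},\mathfrak{n}]=[\mathfrak{b},\mathfrak{b}]+[X,\mathfrak{b}]=[X,\mathfrak{b}]$, and likewise $[\mathfrak{n},\mathfrak{b}]=[X,\mathfrak{b}]=\mathfrak{n}^1$. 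Hence the whole claim is equivalent to $\mathfrak{n}^1\subseteq\mathfrak{b}$.

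To prove $\mathfrak{n}^1\subseteq\mathfrak{b}$ I would argue by contradiction. If $\mathfrak{n}^1\not\subseteq\mathfrak{b}$, then, since $\mathfrak{b}$ has codimension $1$, we get $\mathfrak{b}+\mathfrak{n}^1=\mathfrak{n}$. Substituting $\mathfrak{n}=\mathfrak{b}+\mathfrak{n}^1$ into $\mathfrak{n}^1=[\mathfrak{n},\mathfrak{n}]$ and expanding by bilinearity, each bracket of two elements of $\mathfrak{b}+\mathfrak{n}^1$ either vanishes (the $[\mathfrak{b},\mathfrak{b}]$ contribution) or lies in $[\mathfrak{n},\mathfrak{n}^1]=\mathfrak{n}^2$; thus $\mathfrak{n}^1\subseteq\mathfrak{n}^2$, and since $\mathfrak{n}^2\subseteq\mathfrak{n}^1$ always, we obtain $\mathfrak{n}^1=\mathfrak{n}^2$. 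A trivial induction using $\mathfrak{n}^{k+1}=[\mathfrak{n}^k,\mathfrak{n}]$ then gives $\mathfrak{n}^1=\mathfrak{n}^k$ for all $k\geq 1$, and nilpotency of $\mathfrak{n}$ (so $\mathfrak{n}^p=0$) forces $\mathfrak{n}^1=0$. But $0\subseteq\mathfrak{b}$, contradicting $\mathfrak{n}^1\not\subseteq\mathfrak{b}$.

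Consequently $\mathfrak{n}^1\subseteq\mathfrak{b}$, and therefore $[\mathfrak{n},\mathfrak{b}]\subseteq[\mathfrak{n},\mathfrak{n}]=\mathfrak{n}^1\subseteq\mathfrak{b}$, i.e. $\mathfrak{b}$ is an ideal; being abelian by assumption, it is an abelian ideal. There is no serious obstacle in this argument: the only slightly delicate point is the expansion step producing $\mathfrak{n}^1\subseteq\mathfrak{n}^2$, where one must use both that $[\mathfrak{b},\mathfrak{b}]=0$ and that $\mathfrak{n}^1$ is absorbed into $\mathfrak{n}^2$ upon bracketing with all of $\mathfrak{n}$; once $\mathfrak{n}^1=\mathfrak{n}^2$ is in hand, the contradiction with nilpotency is immediate. (One could instead invoke the classical fact that a maximal subalgebra of a nilpotent Lie algebra is an ideal — a codimension-$1$ subalgebra being automatically maximal — but the direct argument above is shorter and self-contained.)
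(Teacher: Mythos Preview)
Your argument is correct, but it takes a different route from the paper's. The paper works element-wise: it fixes $Y\in\mathfrak{b}$ and $X\notin\mathfrak{b}$, writes $[Y,X]=\alpha X+Z$ with $Z\in\mathfrak{b}$, and then computes $\ad^k(Y)(X)=\alpha^k X+\alpha^{k-1}Z$; nilpotency of $\ad(Y)$ forces $\alpha=0$, so $[Y,X]\in\mathfrak{b}$. Your proof is instead structural: you reduce the claim to $\mathfrak{n}^1\subseteq\mathfrak{b}$ and, assuming otherwise, use $\mathfrak{n}=\mathfrak{b}+\mathfrak{n}^1$ to show $\mathfrak{n}^1=\mathfrak{n}^2$, which stabilizes the lower central series and contradicts nilpotency. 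The paper's version has the virtue of being a direct computation that never needs to name the lower central series, while yours makes the role of the series explicit and, as you note, transparently recovers the general fact that a codimension-$1$ subalgebra of a nilpotent Lie algebra contains $\mathfrak{n}^1$ and is therefore an ideal. Both arguments are of comparable length and self-contained.
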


\begin{proof} One can find the proof of this lemma e.~g. in \cite{Ten}, but we give it here
for the convenience of the reader.
Fix some $Y \in \mathfrak{b}$. We should prove  that $[Y,X]\in \mathfrak{b}$ for every $X\in \mathfrak{n}$.
If $X\in  \mathfrak{b}$, then $[Y,X]=0$. Now we suppose that $X \not\in  \mathfrak{b}$.
Obviously, there are $\alpha \in \mathbb{R}$ and $Z\in \mathfrak{b}$ such that
$\ad(Y)(X)=[Y,X]=\alpha X+Z$. Then we have $\ad^2(Y)(X)=[Y, \alpha X+Z]=\alpha^2 X+ \alpha Z$, $\dots$,
$\ad^k (Y)(X)=\alpha^k X+ \alpha^{k-1} Z$, $k \geq 1$. Since $\mathfrak{n}$ is nilpotent, then
$\ad^k (Y)(X)=0$ for sufficiently large $k$. Therefore, $\alpha=0$ and
$[Y,X]=Z \in \mathfrak{b}$.
\end{proof}

\medskip

Now, we fix {\it an inner product $(\cdot,\cdot)$ on the Lie algebra $\mathfrak{n}$} and consider some
important properties of
{\it the metric Lie algebra $(\mathfrak{n},(\cdot,\cdot))$}.
Let $V_k$ be a $(\cdot,\cdot)$-orthogonal complement to $\mathfrak{n}^k$ in $\mathfrak{n}^{k-1}$, $k=1,2,\dots,p$.
It is clear that $V_p=\mathfrak{n}^{p-1}$, since $\mathfrak{n}^{p}=0$.

\medskip

We consider the space $\End(\mathfrak{n})$ of linear endomorphisms of the Lie algebra $\mathfrak{n}$ and
its three subspaces $L_1$, $L_2$ and $L_3$, defined as follows:
\begin{eqnarray*}
L_1=\{A \in \End(\mathfrak{n})\,|\,A(\mathfrak{n}^k)\subset \mathfrak{n}^k \,\, \mbox{for} \,\,k=0,1,\dots, p-1\},\hspace{3mm}
\\
L_2=\{A \in \End(\mathfrak{n})\,|\,A(V_k)\subset V_k \,\, \mbox{for} \,\,k=1,\dots, p\},\hspace{14mm}
\\
L_3=\{A \in \End(\mathfrak{n})\,|\,A(\mathfrak{n}^k)\subset \mathfrak{n}^{k+1} \,\, \mbox{for} \,\,k=0,1,\dots, p-1\},
\end{eqnarray*}

Obviously, Lemmas \ref{derstruc} and
\ref{innderstruc} imply the following

\begin{cor}\label{subder}
For every nilpotent Lie algebra $\mathfrak{n}$ the inclusions $\Der(\mathfrak{n})\subset L_1$ and
$\InnDer(\mathfrak{n})\subset L_3$ hold.
\end{cor}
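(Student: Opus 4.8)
The plan is to simply unwind the definitions of $L_1$ and $L_3$ against the two preceding lemmas. For the first inclusion I would take an arbitrary $A \in \Der(\mathfrak{n})$ and invoke Lemma \ref{derstruc}, which asserts $A(\mathfrak{n}^k) \subset \mathfrak{n}^k$ for all $k \geq 0$; restricting to the range $k = 0, 1, \dots, p-1$ gives exactly the membership condition defining $L_1$, so $A \in L_1$. Since $A$ was arbitrary, $\Der(\mathfrak{n}) \subset L_1$.

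For the second inclusion I would take an arbitrary $A \in \InnDer(\mathfrak{n})$ and invoke Lemma \ref{innderstruc}, which asserts $A(\mathfrak{n}^k) \subset \mathfrak{n}^{k+1}$ for all $k \geq 0$; again restricting to $k = 0, 1, \dots, p-1$ is precisely the condition defining $L_3$, so $A \in L_3$, whence $\InnDer(\mathfrak{n}) \subset L_3$. There is no real obstacle here: the statement is a direct specialization of Lemmas \ref{derstruc} and \ref{innderstruc} to the finitely many indices occurring in the definitions of $L_1$ and $L_3$ (note that $\mathfrak{n}^p = 0$, so the inclusions for $k \geq p$ are automatic and impose no extra constraint), which is exactly why the author can call the corollary obvious.
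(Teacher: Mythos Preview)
Your proposal is correct and matches the paper's approach exactly: the paper states the corollary as an immediate consequence of Lemmas~\ref{derstruc} and~\ref{innderstruc} without further argument, and your unwinding of the definitions of $L_1$ and $L_3$ against those lemmas is precisely the intended justification.
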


\smallskip

In what follows, we denote by {\it $C'$ the adjoint operator to the operator
$C\in\End(\mathfrak{n})$ with respect to the inner product $(\cdot, \cdot)$ on $\mathfrak{n}$},
i.~e. $(C(X),Y)=(X,C'(Y))$ for every $X,Y \in \mathfrak{n}$. If we represent operators from $\End(\mathfrak{n})$
by matrices in some $(\cdot, \cdot)$-orthonormal basis for $\mathfrak{n}$,
then {\it $C'$ is the transpose of the matrix $C$}. We will use also the notation $C^s$ for a
symmetric part of the operator $C$ with respect to $(\cdot, \cdot)$, i.~e. $C^s=\frac{1}{2}(C+C')$.
\smallskip

\begin{lem}\label{vspom1n}
If $A \in \Der(\mathfrak{n})$ is such that $A' \in \Der(\mathfrak{n})$,
then $A,A' \in L_2$.
\end{lem}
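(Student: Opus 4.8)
The plan is to exploit the fact that $A$ and $A'$ preserve the lower central series, plus an induction from the top of the series downward. By Corollary~\ref{subder} applied to $A$ and to $A'$, both operators lie in $L_1$, i.e.\ $A(\mathfrak{n}^k)\subset \mathfrak{n}^k$ and $A'(\mathfrak{n}^k)\subset\mathfrak{n}^k$ for all $k=0,1,\dots,p-1$. I will then deduce that $A$ and $A'$ also preserve each $V_k$, where $V_k$ is the $(\cdot,\cdot)$-orthogonal complement of $\mathfrak{n}^k$ in $\mathfrak{n}^{k-1}$. The key observation is that invariance of a subspace under $A$ is equivalent to invariance of its orthogonal complement under $A'$: if $W$ is $A$-invariant then for $w\in W^{\perp}$ and $u\in W$ we have $(A(w),u)=(w,A'(u))$, and this need not vanish — so that simple duality is not quite enough by itself, and I must use both hypotheses together.

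The heart of the argument is the following. Since $A\in L_1$, $\mathfrak{n}^{k}$ is $A$-invariant; since $A'\in L_1$, $\mathfrak{n}^{k}$ is $A'$-invariant, hence $(\mathfrak{n}^{k})^{\perp}$ is $A$-invariant. Now $\mathfrak{n}^{k-1}=V_k\oplus\mathfrak{n}^k$ orthogonally, and I want $A(V_k)\subset V_k$. Take $v\in V_k\subset\mathfrak{n}^{k-1}$. Because $A\in L_1$ and $\mathfrak{n}^{k-1}$ is $A$-invariant, $A(v)\in\mathfrak{n}^{k-1}$, so write $A(v)=v'+w$ with $v'\in V_k$, $w\in\mathfrak{n}^k$; I must show $w=0$. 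Pair with an arbitrary $u\in\mathfrak{n}^{k}$: $(A(v),u)=(w,u)$. On the other hand $(A(v),u)=(v,A'(u))$, and $A'(u)\in\mathfrak{n}^{k}$ because $A'\in L_1$, whence $(v,A'(u))=0$ since $v\in V_k\perp\mathfrak{n}^k$. Therefore $(w,u)=0$ for all $u\in\mathfrak{n}^k$, and since $w\in\mathfrak{n}^k$ this forces $w=0$. Thus $A(V_k)\subset V_k$ for every $k=1,\dots,p$, i.e.\ $A\in L_2$. Applying the identical argument with the roles of $A$ and $A'$ interchanged (the hypothesis is symmetric in $A,A'$, and $(A')'=A$) gives $A'\in L_2$ as well.

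I do not expect any serious obstacle here; the only point requiring care is bookkeeping with the two families of subspaces $\{\mathfrak{n}^k\}$ and $\{V_k\}$ and keeping straight which operator ($A$ or $A'$) is being fed to Corollary~\ref{subder} at each stage. One should also note that the statement uses $k$ up to $p$ for the $V_k$'s while $L_1$ is phrased for $k$ up to $p-1$; this is harmless because $V_p=\mathfrak{n}^{p-1}$ and $\mathfrak{n}^{p-1}$ is covered by the $L_1$ condition at $k=p-1$ (and $\mathfrak{n}^p=0$ is trivially invariant), so the boundary case $k=p$ in the argument above goes through verbatim with $\mathfrak{n}^{k}=\mathfrak{n}^p=0$.
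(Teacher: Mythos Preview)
Your proof is correct and follows essentially the same approach as the paper: use Corollary~\ref{subder} to get $A,A'\in L_1$, then for $v\in V_k$ observe $A(v)\in\mathfrak{n}^{k-1}$ and $(A(v),u)=(v,A'(u))=0$ for all $u\in\mathfrak{n}^k$, whence $A(v)\in V_k$. The paper's version is just a bit more compact, omitting the explicit decomposition $A(v)=v'+w$ and the discussion of the boundary case $k=p$.
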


\begin{proof}
Consider any $X\in \mathfrak{n}^k$ and any $Y \in V_k$, $k=1,\dots, p$.
By Corollary~\ref{subder} we get $A, A' \in L_1$. Therefore, $A(\mathfrak{n}^k)\subset \mathfrak{n}^k$,
$A'(\mathfrak{n}^k)\subset \mathfrak{n}^k$,
$A(\mathfrak{n}^{k-1})\subset \mathfrak{n}^{k-1}$ and $A'(\mathfrak{n}^{k-1})\subset \mathfrak{n}^{k-1}$.
Since $V_k$ is the orthogonal
complement to $\mathfrak{n}^k$ in $\mathfrak{n}^{k-1}$, then we have
$$
(X,A(Y))=(A'(X),Y)=0, \quad (X,A'(Y))=(A(X),Y)=0.
$$
This implies $A(V_k) \subset V_k$ and $A'(V_k) \subset V_k$ for all $k=1,\dots, p$, i.~e. $A,A' \in L_2$.
\end{proof}

\medskip

{\it We supply the linear space $\End(\mathfrak{n})$ with an inner product $\langle \cdot, \cdot \rangle$ as follows}:
$\langle A, B \rangle =\trace (AB')$, where  $B'$ the adjoint operator to the operator
$B\in\End(\mathfrak{n})$ with respect to $(\cdot, \cdot)$.
Using a matrix representation in any $(\cdot, \cdot)$-ortho\-normal basis of $\mathfrak{n}$,
we easily get
$\langle A, B \rangle =\trace (AB')=\trace (B'A)=\trace(A'B)=\trace (BA')$.
Note also that
$\langle A', B' \rangle=\trace(A'B)=\trace(BA')=\langle A, B \rangle$ for every $A,B \in \End(\mathfrak{n})$.

\smallskip

\begin{lem}\label{vspom2}
If $A \in L_2$ and $B \in L_3$, then $\langle A, B \rangle=0$.
\end{lem}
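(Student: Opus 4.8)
The plan is to exploit the $(\cdot,\cdot)$-orthogonal decomposition $\mathfrak{n}=V_1\oplus V_2\oplus\cdots\oplus V_p$ coming from the lower central series. First I would record the elementary fact that
$\mathfrak{n}^k=V_{k+1}\oplus V_{k+2}\oplus\cdots\oplus V_p$ for $k=0,1,\dots,p$ (with the empty sum understood to be $0$, which matches $\mathfrak{n}^p=0$); this follows by downward induction on $k$ straight from the definition of $V_k$ as the $(\cdot,\cdot)$-orthogonal complement of $\mathfrak{n}^k$ in $\mathfrak{n}^{k-1}$. Given any $C\in\End(\mathfrak{n})$, write it in block form $C=(C_{ij})$, $1\le i,j\le p$, where $C_{ij}\colon V_j\to V_i$ is the $V_i$-component of $C|_{V_j}$. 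Then $\trace C=\sum_{i=1}^p\trace(C_{ii})$, and, because the decomposition is $(\cdot,\cdot)$-orthogonal, $(C')_{ij}=(C_{ji})'$.

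Next I would translate the two membership conditions into statements about these blocks. If $A\in L_2$, then $A(V_k)\subset V_k$ for all $k$, so $A$ is block-diagonal: $A_{ij}=0$ whenever $i\ne j$. If $B\in L_3$, then for each $j$ we have $B(V_j)\subset B(\mathfrak{n}^{j-1})\subset\mathfrak{n}^j=V_{j+1}\oplus\cdots\oplus V_p$, using $V_j\subset\mathfrak{n}^{j-1}$; hence $B_{ij}=0$ unless $i\ge j+1$, i.e.\ $B$ is strictly block-lower-triangular. Consequently $B'$ is strictly block-upper-triangular, and in particular its diagonal blocks vanish: $(B')_{ii}=(B_{ii})'=0$ for every $i$.

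Finally, the diagonal blocks of $AB'$ are $(AB')_{ii}=\sum_{k}A_{ik}(B')_{ki}=A_{ii}(B')_{ii}=0$, the first equality by block multiplication and the second because $A$ is block-diagonal and $(B')_{ii}=0$. Therefore $\langle A,B\rangle=\trace(AB')=\sum_{i=1}^p\trace\big((AB')_{ii}\big)=0$, which proves the lemma. As for the main obstacle, there is essentially none of substance: the argument is a routine block-triangularity computation once the orthogonal decomposition $\mathfrak{n}=\bigoplus_k V_k$ is set up. The only points deserving a moment's care are the identification $\mathfrak{n}^k=V_{k+1}\oplus\cdots\oplus V_p$ and the block formula $(C')_{ij}=(C_{ji})'$, both of which rest on the orthogonality of the subspaces $V_k$.
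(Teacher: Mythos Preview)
Your proof is correct and follows essentially the same approach as the paper. The paper argues that $A'\in L_2$, then shows $BA'\in L_3$ and notes that every operator in $L_3$ is trace-free; your block-matrix computation of the diagonal of $AB'$ is just a more explicit rendition of the same observation that a block-diagonal operator composed with a strictly block-triangular one (relative to the orthogonal decomposition $\mathfrak{n}=\bigoplus_k V_k$) has vanishing trace.
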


\begin{proof}
Since $(A(V_k),V_l)=0$ is equivalent to $(V_k, A'(V_l))=0$, then $A' \in L_2$. Further, since
$BA'(V_k)\subset B(V_k)\subset B(\mathfrak{n}^{k-1})\subset \mathfrak{n}^{k}$
and $BA'(\mathfrak{n}^{k-1})=BA'\bigl(\bigoplus_{i\geq k}{V_i}\bigr) \subset \mathfrak{n}^{k}$, then we get
$BA'\in L_3$.
But every operator
$C\in L_3$ is trace-free (it is easy to check by using a basis $\{e_j\}$ in $\mathfrak{n}$, such that
every $e_j$ lies in some $V_l$).
Therefore, $\langle A, B \rangle=\trace(BA')=0$.
\end{proof}

\begin{lem}\label{vnilp}
For any nilpotent matrix $L=(l_{ij})$
with real entries the equality
$$
2\trace(L^s \cdot L^s) = \trace(L \cdot L')
$$
holds, where
$L'$ is the transpose of the  matrix $L$, $L^s=\frac{1}{2}(L+L')$ is a symmetric part of~$L$.
\end{lem}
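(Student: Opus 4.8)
The plan is to exploit the fact that a nilpotent matrix has all eigenvalues equal to zero, hence $\trace(L)=0$ and, more usefully, $\trace(L^k)=0$ for every $k\geq 1$. Write $L^s=\tfrac12(L+L')$ and $L^a=\tfrac12(L-L')$ for the symmetric and skew-symmetric parts, so $L=L^s+L^a$. First I would expand both sides in terms of $L^s$ and $L^a$: since $\trace(XY)=\trace(YX)$ and $\trace(X)=\trace(X')$, one gets $\trace(L\cdot L')=\trace\bigl((L^s+L^a)(L^s-L^a)\bigr)=\trace(L^s L^s)-\trace(L^a L^a)$, using that $\trace(L^a L^s)=\trace(L^s L^a)$ cancels. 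So the claimed identity $2\trace(L^s L^s)=\trace(L L')$ is equivalent to $\trace(L^s L^s)=-\trace(L^a L^a)$, i.e. to $\trace(L^s L^s)+\trace(L^a L^a)=0$.

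Next I would observe that $\trace(L^2)=\trace\bigl((L^s+L^a)^2\bigr)=\trace(L^s L^s)+\trace(L^a L^a)+2\trace(L^s L^a)$, and that $\trace(L^s L^a)=0$ because $L^s L^a$ has a symmetric factor and a skew-symmetric one: indeed $\trace(L^s L^a)=\trace\bigl((L^s L^a)'\bigr)=\trace\bigl((L^a)'(L^s)'\bigr)=\trace(-L^a L^s)=-\trace(L^s L^a)$, forcing it to vanish. Therefore $\trace(L^2)=\trace(L^s L^s)+\trace(L^a L^a)$. Since $L$ is nilpotent, $L^2$ is nilpotent as well, so $\trace(L^2)=0$, which gives exactly $\trace(L^s L^s)+\trace(L^a L^a)=0$, the identity we reduced to in the previous step.

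The only point requiring a word of care is the claim $\trace(L^2)=0$; this is immediate because a nilpotent operator has characteristic polynomial $t^n$, hence all eigenvalues zero, hence every power is nilpotent with zero trace (alternatively, $\trace(L^2)$ is the second power-sum of the eigenvalues, which are all $0$). With that in hand, the lemma follows by combining the two trace expansions above. I do not anticipate a genuine obstacle here — the whole argument is an elementary manipulation of traces of symmetric and skew-symmetric parts together with the vanishing of $\trace(L^2)$ for nilpotent $L$.
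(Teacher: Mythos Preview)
Your argument is correct. The route differs from the paper's, though. The paper first observes that both sides are invariant under orthogonal conjugation $L\mapsto QLQ^{-1}$, and then uses the real Schur form to reduce to the case where $L$ is strictly upper triangular; in that case a direct entrywise count gives $4\trace(L^sL^s)=\sum_{i,j}(l_{ij}+l_{ji})^2=2\sum_{i,j}l_{ij}^2=2\trace(LL')$, since $l_{ij}l_{ji}=0$ for all $i,j$ when $L$ is strictly upper triangular.

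Your approach is more intrinsic: you never choose a basis, and the nilpotency hypothesis enters through the single scalar identity $\trace(L^2)=0$, which is exactly what forces $\trace(L^sL^s)+\trace(L^aL^a)=0$. In fact your computation can be compressed to one line,
\[
2\trace(L^sL^s)=\tfrac12\trace\bigl((L+L')^2\bigr)=\trace(L^2)+\trace(LL')=\trace(LL'),
\]
using $\trace((L')^2)=\trace(L^2)$. This avoids the (mild) appeal to the existence of an orthogonal triangularization for real nilpotent matrices, while the paper's version has the virtue of making the identity a transparent entrywise computation once one is in triangular form.
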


\begin{proof} For any orthogonal matrix $Q$,
the above equality does not change when we replace $L$ with $QLQ^{-1}$.
Hence, it suffices to consider the case when $L$ is upper triangular with zeros on the main diagonal.
Then we get
$$
4\trace(L^s \cdot L^s)=\sum\limits_{i,j} (l_{ij}+l_{ji})^2=
2\sum\limits_{i,j} l_{ij}^2+2\sum\limits_{i,j} l_{ij}l_{ji}=2\sum\limits_{i,j} l_{ij}^2=2\trace(L \cdot L').
$$
This proves the lemma.
\end{proof}

\medskip

Obviously,
$\InnDer(\mathfrak{n})\subset \Der(\mathfrak{n})\subset \End(\mathfrak{n})$. {\it We need the
$\langle \cdot, \cdot \rangle$-orthogonal projection}
\begin{equation}\label{eqproj}
P_{inner}: \Der(\mathfrak{n}) \rightarrow \InnDer(\mathfrak{n}),
\end{equation}
i.~e. $P_{inner}(A) \in \InnDer(\mathfrak{n})$ and
$\langle A-P_{inner}(A), B \rangle=0$ for every $A \in \Der(\mathfrak{n})$ and $B\in \InnDer(\mathfrak{n})$.

\smallskip

\begin{lem}\label{simple}
Consider any $A\in \Der(\mathfrak{n})$ and put $\widetilde{A}:=P_{inner}(A)$.
Then the equalities
$\trace\left(\widetilde{A}\widetilde{A}\right)=0$ and
$\trace\left(\widetilde{A}\widehat{A}\right)=0$ hold, where $\widehat{A}=A-\widetilde{A}$.
\end{lem}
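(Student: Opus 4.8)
The plan is to deduce both equalities from the single fact, already used in the proof of Lemma~\ref{vspom2}, that every operator lying in $L_3$ has zero trace. First I would record two elementary closure properties: $L_1\cdot L_3\subset L_3$ and $L_3\cdot L_1\subset L_3$. Indeed, if $B\in L_1$ and $C\in L_3$, then $(CB)(\mathfrak{n}^k)\subset C(\mathfrak{n}^k)\subset \mathfrak{n}^{k+1}$, while $(BC)(\mathfrak{n}^k)\subset B(\mathfrak{n}^{k+1})\subset \mathfrak{n}^{k+1}$, the last inclusion using $\mathfrak{n}^{k+1}\subset \mathfrak{n}^k$; the same inclusion also shows $L_3\subset L_1$. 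By Corollary~\ref{subder} we have $\Der(\mathfrak{n})\subset L_1$ and $\InnDer(\mathfrak{n})\subset L_3$, so these observations cover everything needed below.

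Now $\widetilde{A}=P_{inner}(A)\in \InnDer(\mathfrak{n})\subset L_3$, whereas $A\in \Der(\mathfrak{n})\subset L_1$. Consequently $\widetilde{A}\widetilde{A}\in L_3$ and $\widetilde{A}A\in L_3$, hence $\trace(\widetilde{A}\widetilde{A})=0$ and $\trace(\widetilde{A}A)=0$. The first of these is exactly the first assertion; the second yields the second assertion by linearity of the trace, since
$$
\trace(\widetilde{A}\widehat{A})=\trace\bigl(\widetilde{A}(A-\widetilde{A})\bigr)=\trace(\widetilde{A}A)-\trace(\widetilde{A}\widetilde{A})=0.
$$

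I do not anticipate a genuine obstacle here: all the content sits in the inclusion $\InnDer(\mathfrak{n})\subset L_3$ together with the fact that $L_3$ is an absorbing subspace inside $L_1$ consisting of trace-free operators. The one point requiring a little care is to resist using the defining property of the projection, namely $\langle A-\widetilde{A},B\rangle=0$ for $B\in \InnDer(\mathfrak{n})$, directly with $B=\widetilde{A}$: that would only give $\trace(\widehat{A}\,\widetilde{A}')=0$, whereas $\widetilde{A}$ need not equal its adjoint $\widetilde{A}'$. The filtration argument above avoids adjoints altogether; in fact it proves the stronger statement that $\trace(CB)=0$ for every $C\in \InnDer(\mathfrak{n})$ and every $B\in L_1$, of which both equalities of the lemma are particular cases.
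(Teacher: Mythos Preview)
Your argument is correct and complete. You exploit the lower-central-series filtration directly: since $\InnDer(\mathfrak{n})\subset L_3$, $\Der(\mathfrak{n})\subset L_1$, and $L_3$ is a two-sided ideal in $L_1$ consisting of trace-free operators, both $\widetilde{A}\widetilde{A}$ and $\widetilde{A}A$ lie in $L_3$ and have vanishing trace. This is entirely self-contained within the apparatus already set up in Section~1 (Lemmas~\ref{derstruc}, \ref{innderstruc}, \ref{vspom2} and Corollary~\ref{subder}).

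The paper proceeds quite differently: it builds the semidirect extension $\mathfrak{s}=\mathfrak{n}\rtimes_A\mathbb{R}$, observes that $\mathfrak{n}$ sits in the nilradical $\mathcal{N}(\mathfrak{s})$, and then invokes the standard fact that the Killing form of a solvable Lie algebra vanishes on $\mathfrak{s}\times\mathcal{N}(\mathfrak{s})$. Evaluating the Killing form on the pairs $\bigl((Y,0),(Y,0)\bigr)$ and $\bigl((0,1),(Y,0)\bigr)$, where $\ad(Y)=\widetilde{A}$, yields the two trace identities. Your route is more elementary and stays entirely within the combinatorics of the filtration; the paper's route is more conceptual, tying the statement to a structural property of solvable Lie algebras, but at the cost of importing an external result from Bourbaki. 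Your closing remark about why the projection identity $\langle\widehat{A},\widetilde{A}\rangle=0$ does not immediately give $\trace(\widehat{A}\,\widetilde{A})=0$ is also well taken.
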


\begin{proof} Let us consider any $A\in \Der(\mathfrak{n})$.
We define a Lie multiplication $[\cdot, \cdot]_1$ on the direct sum of linear spaces
$\mathfrak{s}:=\mathfrak{n}\oplus \mathbb{R}$ as follows:
$$
[(a_1, b_1), (a_2,b_2)]_1=([a_1,a_2]+b_1A(a_2)-b_2A(a_1),0).
$$
Obviously, $(\mathfrak{s}, [\cdot, \cdot]_1)$ is a solvable Lie algebra, and
$\mathfrak{n}$ (we identify each element $X\in \mathfrak{n}$ with $(X,0)\in \mathfrak{s}$)
lies in the nilradical $\mathcal{N}(\mathfrak{s})$ of $\mathfrak{s}$.
Consider $Y\in \mathfrak{n}$ such that $\ad (Y)=\widetilde{A}$.
Then $\ad \bigl((Y,0)\bigr)|_{\mathfrak{n}}=\widetilde{A}$ and $\ad \bigl((0,1)\bigr)|_{\mathfrak{n}}=A$.
We know that the Killing form $B_{\mathfrak{s}}$ of any solvable Lie algebra $\mathfrak{s}$ satisfies the equation
$B_{\mathfrak{s}}\bigl(\mathfrak{s}, \mathcal{N}(\mathfrak{s})\bigr)=0$
(see e.~g. Remark after Proposition 6 of I.5.5 in \cite{Bour}).
Since $(Y,0)\in \mathcal{N}(\mathfrak{s})$, we get
$$
0=B_{\mathfrak{s}}\bigl((Y,0),(Y,0)\bigr)=\trace\Bigl(\ad \bigl((Y,0)\bigr) \cdot \ad \bigl((Y,0)\bigr)\Bigr)=
$$
$$
\trace \Bigl(\ad \bigl((Y,0)\bigr)|_{\mathfrak{n}}\cdot \ad \bigl((Y,0)\bigr)|_{\mathfrak{n}}\Bigr)=
\trace(\widetilde{A}\widetilde{A})
$$
and
$$
0=B_{\mathfrak{s}}\bigl((0,1),(Y,0)\bigr)=\trace\Bigl(\ad \bigl((0,1)\bigr) \cdot \ad \bigl((Y,0)\bigr)\Bigr)=
$$
$$
\trace\Bigl(\ad \bigl((0,1)\bigr)|_{\mathfrak{n}}\cdot \ad \bigl((Y,0)\bigr)|_{\mathfrak{n}}\Bigr)=\trace(A\widetilde{A}).
$$
As a simple corollary, we get also
$\trace \left(\widetilde{A}\widehat{A}\right)=\trace \left(\widetilde{A}A\right)-
\trace \left(\widetilde{A}\widetilde{A}\right)=0$.
\end{proof}

\begin{pred}\label{vspom1}
For every $A \in \Der(\mathfrak{n})$ the inequality
$$
\langle A^s, A^s \rangle \geq \frac{1}{2} \langle \widetilde{A}, \widetilde{A} \rangle
$$
holds, where $\widetilde{A}=P_{inner}(A)$, $A^s=\frac{1}{2}(A+A')$.
Moreover, $\langle A^s, A^s \rangle = \frac{1}{2} \langle \widetilde{A}, \widetilde{A} \rangle$ if and only if
$A-\widetilde{A}$ is a skew-symmetric derivation of $\mathfrak{n}$.
\end{pred}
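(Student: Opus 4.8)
The plan is to decompose $A=\widetilde A+\widehat A$, where $\widetilde A=P_{inner}(A)\in\InnDer(\mathfrak n)$ and $\widehat A=A-\widetilde A$ is $\langle\cdot,\cdot\rangle$-orthogonal to $\InnDer(\mathfrak n)$, and to reduce the asserted inequality to the trivial positivity $\langle C,C\rangle\geq 0$ for a symmetric operator $C$. Note first that $\widehat A$ is again a derivation, being the difference of the two derivations $A$ and $\widetilde A$. Taking symmetric parts gives $A^s=\widetilde A^s+\widehat A^s$, hence
\[
\langle A^s,A^s\rangle=\langle\widetilde A^s,\widetilde A^s\rangle+2\langle\widetilde A^s,\widehat A^s\rangle+\langle\widehat A^s,\widehat A^s\rangle,
\]
and everything comes down to evaluating the first two summands.

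I would handle these using the elementary identity $\langle C^s,C^s\rangle=\langle C,C^s\rangle=\tfrac12\bigl(\langle C,C\rangle+\langle C,C'\rangle\bigr)$, valid for every $C\in\End(\mathfrak n)$ because the symmetric and skew-symmetric parts of an operator are $\langle\cdot,\cdot\rangle$-orthogonal, together with the relation $\langle C,C'\rangle=\trace(C^2)$. Applied to $C=\widetilde A$, and using $\trace(\widetilde A\widetilde A)=0$ from Lemma~\ref{simple}, this gives $\langle\widetilde A^s,\widetilde A^s\rangle=\tfrac12\langle\widetilde A,\widetilde A\rangle$. For the mixed term, expand $2\langle\widetilde A^s,\widehat A^s\rangle=\tfrac12\langle\widetilde A+\widetilde A',\,\widehat A+\widehat A'\rangle$ into four summands: two of them equal $\langle\widetilde A,\widehat A\rangle$, which vanishes since $\widehat A$ is $\langle\cdot,\cdot\rangle$-orthogonal to $\InnDer(\mathfrak n)$ (here one also uses $\langle\widetilde A',\widehat A'\rangle=\langle\widetilde A,\widehat A\rangle$), and the other two equal $\trace(\widetilde A\widehat A)$, which vanishes by Lemma~\ref{simple}. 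Therefore
\[
\langle A^s,A^s\rangle=\tfrac12\langle\widetilde A,\widetilde A\rangle+\langle\widehat A^s,\widehat A^s\rangle .
\]

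Since $\langle\widehat A^s,\widehat A^s\rangle=\trace\bigl(\widehat A^s(\widehat A^s)'\bigr)\geq 0$, the inequality follows at once, with equality if and only if $\langle\widehat A^s,\widehat A^s\rangle=0$, i.e. $\widehat A^s=0$, i.e. $\widehat A=A-\widetilde A$ is skew-symmetric; being automatically a derivation, $\widehat A$ is then a skew-symmetric derivation of $\mathfrak n$, and conversely that condition forces $\widehat A^s=0$ and hence equality. There is no genuine obstacle in this argument: its entire substance is carried by the two trace identities of Lemma~\ref{simple}, and the only point requiring attention is careful bookkeeping of the inner product $\langle\cdot,\cdot\rangle$ on $\End(\mathfrak n)$ — in particular the relations $\langle C,C'\rangle=\trace(C^2)$ and $\langle C',D'\rangle=\langle C,D\rangle$ together with the $\langle\cdot,\cdot\rangle$-orthogonality of symmetric and skew-symmetric operators. (One could replace the use of Lemma~\ref{simple} for the identity $\trace(\widetilde A^2)=0$ by the nilpotency of $\widetilde A=\ad(Y)$ and Lemma~\ref{vnilp}, but Lemma~\ref{simple} is still needed for the mixed trace $\trace(\widetilde A\widehat A)$.)
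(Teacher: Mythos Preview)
Your proof is correct and follows essentially the same approach as the paper: decompose $A=\widetilde A+\widehat A$, expand $\langle A^s,A^s\rangle$, and use the two trace identities from Lemma~\ref{simple} (namely $\trace(\widetilde A\widetilde A)=0$ and $\trace(\widetilde A\widehat A)=0$) together with the orthogonality $\langle\widetilde A,\widehat A\rangle=0$ to kill the cross terms and arrive at $\langle A^s,A^s\rangle=\tfrac12\langle\widetilde A,\widetilde A\rangle+\langle\widehat A^s,\widehat A^s\rangle$. The only cosmetic difference is that the paper writes $2A^s=\widetilde A+\widetilde A'+2\widehat A^s$ and squares that, whereas you square $A^s=\widetilde A^s+\widehat A^s$ and then unfold $\widetilde A^s$; the underlying computation and the equality case are identical.
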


\begin{proof}
Put $\widehat{A}:=A-\widetilde{A}$. Then $A=\widetilde{A}+\widehat{A}$,
$2A^s=\widetilde{A}+\widetilde{A}'+2\widehat{A}^s$ and
$$4\langle A^s,A^s \rangle=2\langle \widetilde{A}, \widetilde{A}\rangle+
4\langle \widetilde{A}, \widehat{A}^s\rangle+
4\langle \widetilde{A}', \widehat{A}^s\rangle+4\langle \widehat{A}^s,\widehat{A}^s \rangle,
$$
since
$\langle \widetilde{A}', \widetilde{A}'\rangle=\trace(\widetilde{A}'\widetilde{A})=\trace(\widetilde{A}\widetilde{A}')=
\langle \widetilde{A}, \widetilde{A}\rangle$ and
$\langle \widetilde{A}, \widetilde{A}'\rangle=\trace(\widetilde{A}\widetilde{A})=0$ by Lemma \ref{simple}.
Further, since $\langle \widetilde{A}', \widehat{A}\rangle=\langle \widetilde{A}, \widehat{A}'\rangle=
\trace(\widetilde{A}\widehat{A})=0$ (by Lemma \ref{simple}) and
$\langle \widetilde{A}', \widehat{A}'\rangle=\langle \widetilde{A}, \widehat{A}\rangle=0$
(by definitions of $\widetilde{A}$ and $\widehat{A}$), then $\langle \widetilde{A}, \widehat{A}^s\rangle=
\langle \widetilde{A}', \widehat{A}^s\rangle=0$. Therefore,
$$\langle A^s,A^s \rangle=\frac{1}{2}\langle \widetilde{A}, \widetilde{A}\rangle+
\langle \widehat{A}^s,\widehat{A}^s \rangle \geq \frac{1}{2}\langle \widetilde{A}, \widetilde{A}\rangle.
$$
Clear, that $\langle \widehat{A}^s,\widehat{A}^s \rangle=0$ if and only if the derivation $\widehat{A}$
is skew-symmetric.
\end{proof}

\smallskip

We will need also one well known result on localization of the eigenvalues of a symmetric matrix
and one its obvious corollary.

\begin{pred}[cf. Theorem 4.3.8 in \cite{Horn}]
Let $\widetilde{A}$ be a symmetric
$(n \times n)$-matrix with real entries, $A$ be a matrix obtained from $\widetilde{A}$ by deleting
the last row and the last column. Assume that the eigenvalues
$\lambda_i$ of $A$ and the eigenvalues $\widetilde{\lambda}_i$ of $\widetilde{A}$
have been arranged in increasing order
$\lambda_1 \leq  \lambda_2 \leq \cdots \leq \lambda_{n-2}\leq \lambda_{n-1}$ and
$\widetilde{\lambda}_1 \leq \widetilde{\lambda}_2 \leq \cdots
\leq \widetilde{\lambda}_{n-1} \leq \widetilde{\lambda}_{n}$.
Then the inequality
$$
\widetilde{\lambda}_1 \leq \lambda_1 \leq \widetilde{\lambda}_2 \leq \cdots \leq \widetilde{\lambda}_{n-1}
\leq \lambda_{n-1} \leq \widetilde{\lambda}_{n}
$$
holds.
\end{pred}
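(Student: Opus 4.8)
The plan is to deduce this from the Courant--Fischer min-max characterization of the eigenvalues of a symmetric matrix, which is itself well known (see, e.g., \cite{Horn}). Identify $\mathbb{R}^{n-1}$ with the hyperplane $W=\{x\in\mathbb{R}^n:\ x_n=0\}$. Then for $x\in W$ one has $(\widetilde{A}x,x)=(Ax,x)$, where $(\cdot,\cdot)$ denotes the standard inner product on the respective space and we abuse notation by writing $x$ also for the vector of its first $n-1$ coordinates. Consequently the Rayleigh quotients $R_{\widetilde A}(x)=(\widetilde A x,x)/(x,x)$ and $R_A(x)=(Ax,x)/(x,x)$ coincide on $W\setminus\{0\}$. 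This identification is the only geometric input needed; the rest is bookkeeping with subspaces and indices.

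For the lower bound $\widetilde{\lambda}_k\leq\lambda_k$ ($k=1,\dots,n-1$): by Courant--Fischer, $\widetilde{\lambda}_k=\min\{\max_{0\neq x\in S}R_{\widetilde A}(x):\ S\subset\mathbb{R}^n,\ \dim S=k\}$, while $\lambda_k=\min\{\max_{0\neq x\in T}R_A(x):\ T\subset W,\ \dim T=k\}$. Since $k$-dimensional subspaces of $W$ form a subfamily of $k$-dimensional subspaces of $\mathbb{R}^n$, and the two Rayleigh quotients agree on $W$, restricting the minimum to this subfamily can only increase it; hence $\widetilde{\lambda}_k\leq\lambda_k$.

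For the upper bound $\lambda_k\leq\widetilde{\lambda}_{k+1}$ ($k=1,\dots,n-1$): let $S\subset\mathbb{R}^n$ be an arbitrary $(k+1)$-dimensional subspace. Because $W$ has codimension $1$ in $\mathbb{R}^n$, we have $\dim(S\cap W)\geq k$, so $S\cap W$ contains some $k$-dimensional subspace $T\subset W$. Then
$$
\max_{0\neq x\in S}R_{\widetilde A}(x)\ \geq\ \max_{0\neq x\in T}R_{\widetilde A}(x)\ =\ \max_{0\neq x\in T}R_A(x)\ \geq\ \lambda_k,
$$
where the last inequality uses the min-max formula for $\lambda_k$. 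Taking the minimum over all $(k+1)$-dimensional $S\subset\mathbb{R}^n$ and invoking Courant--Fischer once more gives $\widetilde{\lambda}_{k+1}\geq\lambda_k$. Combining the two bounds for all $k=1,\dots,n-1$ produces the asserted chain of inequalities.

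There is no serious obstacle here: the argument is essentially a routine application of the min-max principle, and the only points requiring a little care are the dimension count $\dim(S\cap W)\geq\dim S-1$ and keeping the indices of the increasingly ordered eigenvalues of $A$ and $\widetilde{A}$ correctly aligned. An alternative route — expanding $\det(\lambda I-\widetilde{A})$ along the last row and column to relate the characteristic polynomials of $A$ and $\widetilde{A}$, or treating $\widetilde{A}$ as a rank-one-type perturbation and counting sign changes — also works, but it is more computational, so I would not pursue it.
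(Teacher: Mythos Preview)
Your argument is correct: this is the standard derivation of Cauchy's interlacing theorem from the Courant--Fischer min--max principle, and both halves ($\widetilde{\lambda}_k\le\lambda_k$ and $\lambda_k\le\widetilde{\lambda}_{k+1}$) are handled cleanly. The dimension count $\dim(S\cap W)\ge k$ for a $(k+1)$-dimensional $S$ is the only non-trivial step, and you state it correctly.

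As for comparison with the paper: there is nothing to compare. The paper does not prove this proposition at all; it merely records the statement with a pointer to Theorem~4.3.8 in \cite{Horn} and then uses its corollary. Your proof therefore supplies what the paper omits, and it is in fact essentially the same argument one finds in the cited reference.
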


\begin{cor}
\label{dopmat}
Let $A$ be a symmetric $(n \times n)$-matrix with real entries, $B$ be a $(m \times m)$-matrix obtained
from $A$ by deleting of $(n-m)$ rows and $(n-m)$ columns with coincided sets of indexes.
If the matrix $B$ is positive (negative) definite, then the matrix $A$ has at least $m$
positive (respectively, negative) eigenvalues.
\end{cor}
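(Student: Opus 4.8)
The plan is to deduce this from the preceding interlacing Proposition by removing the $n-m$ rows and columns one at a time. First I would note that a principal submatrix of a real symmetric matrix is again real symmetric, so every intermediate matrix arising in such a process is symmetric and the Proposition is applicable at each step. Moreover, conjugating a symmetric matrix by a permutation matrix changes neither its symmetry nor its eigenvalues, so at each single step I may assume that the row and the column to be deleted are the last ones; then the situation is precisely the one covered by the Proposition.

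Next I would record the key monotonicity consequence of the interlacing inequalities. Suppose $\widetilde{A}$ is symmetric of size $k+1$ and $A$ is obtained from it by deleting the last row and the last column. Arranging the eigenvalues in increasing order, the Proposition gives $\lambda_i \leq \widetilde{\lambda}_{i+1}$ for $i=1,\dots,k$. Hence, if $A$ has at least $j$ positive eigenvalues, i.e. $\lambda_{k-j+1},\dots,\lambda_k>0$, then $\widetilde{\lambda}_{k-j+2},\dots,\widetilde{\lambda}_{k+1}>0$, so $\widetilde{A}$ also has at least $j$ positive eigenvalues. In words: the number of strictly positive eigenvalues does not decrease when one passes from a principal submatrix to the full matrix by restoring one matching row and column.

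Now I would conclude by induction on $n-m$. The base case $n=m$ is trivial. For the inductive step, write a chain $A=A_0, A_1, \dots, A_{n-m}=B$ in which each $A_{t+1}$ is obtained from $A_t$ by deleting one matching row and column; all the $A_t$ are symmetric principal submatrices of $A$. Since $B=A_{n-m}$ is positive definite, it has $m$ strictly positive eigenvalues, and applying the monotonicity observation $n-m$ times as one moves back up the chain from $A_{n-m}$ to $A_0$ shows that $A$ has at least $m$ positive eigenvalues. The negative-definite case follows at once by applying what has just been proved to $-A$ and $-B$: since $-B$ is positive definite, $-A$ has at least $m$ positive eigenvalues, i.e. $A$ has at least $m$ negative eigenvalues.

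There is no serious obstacle here; this is essentially the Cauchy interlacing corollary. The only point requiring a little care is the bookkeeping that guarantees the one-at-a-time deletions can be arranged so that each single step is a deletion of the last row and column of a symmetric matrix — exactly the hypothesis of the Proposition — and that the quantity tracked along the chain is the number of strictly positive (respectively, strictly negative) eigenvalues.
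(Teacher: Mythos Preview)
Your argument is correct and is exactly the intended one: the paper states this result without proof, merely as a corollary of the interlacing Proposition, and the natural way to extract it is precisely the one-row-and-column-at-a-time induction you carry out. Your bookkeeping (permutation conjugation to place the deleted row/column last, tracking the number of strictly positive eigenvalues via $\lambda_i \leq \widetilde{\lambda}_{i+1}$, and handling the negative-definite case by passing to $-A$) is all in order.
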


\section{The Ricci operator}

Consider a Lie algebra $\mathfrak{g}$ supplied with an inner product
$(\cdot, \cdot)$. We choose some $(\cdot,
\cdot)$-orthonormal basis $\{X_i\}$, $1 \leq i \leq \dim(\mathfrak{g})$, in $\mathfrak{g}$.
Define a vector
$H \in \mathfrak{g}$ by the equality  $(H,X)=\trace (\ad (X))$, where
$\ad (X) (Y)=[X,Y]$, $X, Y\in \mathfrak{g}$.
Note that
$H=0$ if and only if the Lie algebra $\mathfrak{g}$ is unimodular.
For the Ricci operator of the metric Lie algebra
$(\mathfrak{g},(\cdot, \cdot))$ we have the following formula:
\begin{equation}
\label{riccAl}
\operatorname{Ric} = -\frac{1}{2}
\sum\limits_i
{\ad}^{\prime}(X_i)
{\ad} (X_i) +
\frac{1}{4}
\sum\limits_i
{\ad}(X_i)
{\ad}^{\prime}(X_i)
-\frac{1}{2}B - {\ad}^{s}(H),
\end{equation}
where $B$ is the Killing operator, ${\ad}^{\prime}(X_i)$
is the adjoint operator for~${\ad} ({X_i})$ with respect to $(\cdot , \cdot)$, and
${\ad}^{s}(H) = \frac{1}{2}({\ad}(H) +{\ad}^{\prime}(H))$~is a symmetric part
of the operator ${\ad} (H)$~\cite{Al5}.

By $\Ric(X,Y)$ we denote $(\Ric X,Y)=(X, \Ric Y)$, i.~e. the value of the Ricci form on the vectors
$X,Y$ \cite{Al5, Bes}.

\smallskip

Now, we (using some ideas from the paper \cite {NikNik}) get some refinement of the formula~(\ref{riccAl})
for solvable metric Lie algebras.
We will use a notation $M'$ for the transpose of a matrix $M$.

Suppose that a solvable Lie algebra $\mathfrak{s}$ is supplied with an inner product~$Q$.
We are interested in the structures of metric Lie algebras
$(\mathfrak{s},Q)$ and $(\mathfrak{n}, Q|_\mathfrak{n})$,
where $\mathfrak{n}:=[\mathfrak{s},\mathfrak{s}]$~is a derived algebra of the Lie algebra $\mathfrak{s}$.
Let $\mathfrak{a}$ be the orthogonal complement to $\mathfrak{n}$ in
$\mathfrak{s}$ with respect to $Q$.
Put $l =\dim(\mathfrak{n})$ and $m =\dim(\mathfrak{a})$.

Let us choose vectors $\{e_i\}$,  $1 \leq i \leq l$, that form a $Q$-orthonormal basis in $\mathfrak{n}$.
This basis could be completed with a $Q$-orthonormal basis $\{f_1,f_2,...,f_m\}$  in
$\mathfrak{a}$ such that
$$
t:=\trace(\ad(f_1))=t\geq 0,  \quad  \trace(\ad(f_j))=0,  \quad  2 \leq j \leq m.
$$
It is easy to see that for a non-unimodular Lie algebra $\mathfrak{s}$ we have
$f_1=\frac{H}{\|H\|}$, where the vector  $H\in \mathfrak{s}$ is defined by the equation
$Q(H,X) =\trace(\ad(X))$
for all $X \in \mathfrak{s}$. In this case $t=\trace(\ad(f_1))=\|H\|>0$.
If $\mathfrak{s}$ is unimodular, then we can choose any unit vector from $\mathfrak{a}$ as $f_1$.
In this case we get $t=\trace(\ad(f_1))=0$.

It is clear that $\ad(f_j)|_\mathfrak{n} \in \Der(\mathfrak{n})$, $1 \leq j \leq m$, where
$\Der(\mathfrak{n})$ is the Lie algebra of all derivations of $\mathfrak{n}$.
We use the basis $\{e_1,...,e_l, f_1,...,f_m\}$ in order to represent all operators
$\ad(f_j)$ and $\ad(e_i)$ in the matrix form:
\begin{equation}\label{adad}
\ad(f_j) = \left(
\begin{array}{cc}
 A_j  & B_j \\
 0  & 0 \\
 \end{array}
 \right), \quad
\ad(e_i) = \left(
\begin{array}{cc}
 D_i  & C_i\\
 0 & 0 \\
 \end{array}
 \right),
\end{equation}
for some $(l \times l)$-matrices $A_j$, $D_i$  and some $(l \times m)$-matrices $B_j$, $C_i$.

In the basis $\{e_1,...,e_l,f_1,...,f_m\}$, the matrix of the Ricci operator of the solvable metric Lie algebra
$(\mathfrak{s}, Q)$ has the following form (see the formula (\ref{riccAl}) or the proof of Theorem 3 in \cite{NikNik}):
\begin{equation}
\label{nnteor}
\Ric = \left( {{\begin{array}{*{20}c}
 R_1 \hfill & R_2 \hfill\\
 R'_2 \hfill & R_3 \hfill\\
 \end{array} }} \right),
\end{equation}
where
$$
R_1 = \Ric^\mathfrak{n} + \frac{1}{2} \sum\limits_{j=1}^{m}[A_j,A_j'] +
\frac{1}{4} \sum\limits_{j=1}^{m}B_jB_j' - t A_1^s, $$
$$
R_2 = -\frac{1}{2} \left( \sum\limits_{i=1}^{l}D'_iC_i +\sum\limits_{j=1}^{m} A_j'B_j +tB_1 \right),
$$
$$
R_3 = -\frac{1}{2}\sum\limits_{j=1}^{m}B_j'B_j-L,
$$
where $\Ric^\mathfrak{n}$ is the matrix of the Ricci operator of the metric Lie algebra
$(\mathfrak{n}, Q|_{\mathfrak{n}})$
in the basis $\{e_1,....,e_l\}$,
$L$ is a $(m \times m)$-matrix with elements $l_{pq}= \trace(A_p^sA_q^s)$,
$[A_j,A_j'] = A_j A_j' - A_j' A_j$,
$A_j^s = \frac{1}{2} (A_j'+A_j)$, $t=\trace(A_1)=\trace(A_1^s) \geq 0$.

Note also that the formula (\ref{riccAl}) could be simplified for the metric Lie algebra
$(\mathfrak{n}, Q|_{\mathfrak{n}})$
(a nilpotent Lie algebra $\mathfrak{n}$ is unimodular and has trivial Killing form). Namely, we get
the following formula for the matrix of its Ricci operator in the basis $\{e_1,...,e_l\}$:
\begin{equation}
\label{riccAln}
\Ric^\mathfrak{n} = -\frac{1}{2}
\sum\limits_{i=1}^{l} D_i^{\prime}D_i +\frac{1}{4} \sum\limits_{i=1}^{l}D_iD_i^{\prime}.
\end{equation}

We will need the following result.

\begin{pred}[\cite{Mi}]
\label{kossim}
Let $(\mathfrak{g}, (\cdot , \cdot))$ be a metric Lie algebra, $X \in \mathfrak{g}$
is orthogonal to the ideal $[\mathfrak{g}, \mathfrak{g}]$. Then the inequality $\Ric(X,X)\leq 0$ holds.
Moreover, this inequality becomes an equality if and only if the operator $\ad(X)$
is skew-symmet\-ric with respect to $(\cdot , \cdot)$.
\end{pred}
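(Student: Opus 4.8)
The plan is to evaluate the quadratic form $\Ric(X,X)=(\Ric\, X,X)$ directly from formula~(\ref{riccAl}), using the hypothesis $X\perp[\mathfrak{g},\mathfrak{g}]$ to annihilate most of the contributions. First I would record two elementary consequences of that hypothesis. For every $Y\in\mathfrak{g}$ and every $Z\in\mathfrak{g}$ we have $(Z,\ad'(Y)X)=(\ad(Y)Z,X)=([Y,Z],X)=0$, hence $\ad'(Y)X=0$; in particular $\ad'(X_i)X=0$ for every $i$. Also $[H,X]\in[\mathfrak{g},\mathfrak{g}]$, so $(\ad^{s}(H)X,X)=(\ad(H)X,X)=([H,X],X)=0$.

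Next I would go through the four summands of~(\ref{riccAl}) applied to $X$ and paired with $X$. The first summand gives $-\frac12\sum_i(\ad'(X_i)\ad(X_i)X,X)=-\frac12\sum_i(\ad(X_i)X,\ad(X_i)X)=-\frac12\sum_i\|[X,X_i]\|^2=-\frac12\trace(\ad(X)\ad'(X))$. The second summand gives $0$, since $\ad'(X_i)X=0$. The Killing term gives $-\frac12(BX,X)=-\frac12\trace(\ad(X)\ad(X))$, and the last term gives $0$ by the observations above. Hence $\Ric(X,X)=-\frac12\trace(\ad(X)\ad'(X))-\frac12\trace(\ad(X)\ad(X))$.

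To finish I would split $A:=\ad(X)$ into its symmetric and skew-symmetric parts, $A=A^{s}+K$ with $K=\frac12(A-A')$, and use that $\trace(A^{s}K)=0$ (the trace of the product of a symmetric and a skew-symmetric operator vanishes). This yields $\trace(AA')=\trace(A^{s}A^{s})-\trace(KK)$ and $\trace(AA)=\trace(A^{s}A^{s})+\trace(KK)$; adding these, the $\trace(KK)$ terms cancel and $\Ric(X,X)=-\trace(A^{s}A^{s})=-\langle A^{s},A^{s}\rangle\le 0$. Equality holds exactly when $A^{s}=0$, i.e. when $\ad(X)$ is skew-symmetric with respect to $(\cdot,\cdot)$, which is the asserted characterization.

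I do not expect a genuine obstacle: once formula~(\ref{riccAl}) is in hand the argument is a short linear-algebra computation. The only points needing a little care are tracking the adjoints so that the $\frac14$-summand is seen to vanish and fixing the sign convention in the Killing term; note that Lemma~\ref{vnilp} is \emph{not} applicable here, since $\ad(X)$ need not be nilpotent, so the cancellation of the $\trace(KK)$ contributions must come from combining the two traces rather than from that identity.
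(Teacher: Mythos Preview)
Your argument is correct. The observation $\ad'(Y)X=0$ for all $Y$ kills the $\tfrac14$-summand cleanly, the Killing and $\ad^{s}(H)$ contributions are handled correctly, and the symmetric/skew decomposition gives exactly $\Ric(X,X)=-\trace\bigl((A^{s})^{2}\bigr)$, from which both the inequality and the equality characterization follow.

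The paper does not give its own proof: the proposition is attributed to Milnor, and the subsequent remark only indicates that in the \emph{solvable} case one can read it off from the block formula~(\ref{nnteor}), where $X\in\mathfrak{a}$ lands in the $R_3$-block and $R_{3}=-\tfrac12\sum_j B_j'B_j-L$ is visibly negative semi-definite. Your route via~(\ref{riccAl}) is genuinely different: it avoids setting up the block decomposition, it yields the explicit closed form $\Ric(X,X)=-\langle \ad(X)^{s},\ad(X)^{s}\rangle$ (which also makes the equality case transparent), and it applies to an arbitrary metric Lie algebra rather than only to solvable ones. The paper's remark is shorter if one already has~(\ref{nnteor}) in hand, but your computation is self-contained and matches the level of generality in which the proposition is actually stated.
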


\begin{remark}
Note that this proposition could be easy derived from the formula (\ref{nnteor}). Indeed,
the matrix $R_3 = -\frac{1}{2}\sum\limits_{j=1}^{m}B_j'B_j-L$ is negative semi-definite.
If $X=\sum_{j=1}^m {\alpha}_j f_j$, then $\Ric(X,X)=\Ric(\sum_{j=1}^m {\alpha}_j f_j,\sum_{j=1}^m {\alpha}_j f_j)=
\sum_{i,j} {\alpha}_i {\alpha}_j \Ric (f_i,f_j) \leq 0$. The case $\Ric(X,X)=0$ could be easily studied.
\end{remark}

We will need also the following remarkable property of nilpotent metric Lie algebras.

\begin{pred}[Corollary 5 in \cite{Nik2007}]\label{riccider}
Let $(\mathfrak{n}, (\cdot, \cdot))$ be a nilpotent metric Lie algebra.
Then for every derivation $A\in \Der (\mathfrak{n})$ the inequality
$$
\trace(\Ric^{\mathfrak{n}}\cdot [A,A'])=\langle \Ric^{\mathfrak{n}}, [A,A']\rangle \geq 0
$$
holds, where $\Ric^{\mathfrak{n}}$ is the Ricci operator of  $(\mathfrak{n}, (\cdot, \cdot))$.
Moreover, this inequality becomes an equality if and only if $A'\in \Der (\mathfrak{n})$.
\end{pred}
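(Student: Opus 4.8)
The plan is to rewrite $\langle\Ric^{\mathfrak{n}},[A,A']\rangle$ as a squared norm, by means of the ``moment-map'' description of the Ricci operator of a nilpotent metric Lie algebra. Regard the bracket of $\mathfrak{n}$ as a tensor $\mu\in V:=\Lambda^{2}\mathfrak{n}^{*}\otimes\mathfrak{n}$, $\mu(X,Y)=[X,Y]$, and equip $V$ with the inner product $(\cdot,\cdot)_{V}$ induced by $(\cdot,\cdot)$, normalized so that $(\mu,\mu)_{V}=\sum_{i,j}|[e_{i},e_{j}]|^{2}$ for a $(\cdot,\cdot)$-orthonormal basis $\{e_{i}\}$ of $\mathfrak{n}$. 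For $E\in\End(\mathfrak{n})$ set $\theta(E)\mu:=E\circ\mu-\mu(E\cdot,\cdot)-\mu(\cdot,E\cdot)\in V$; this is the infinitesimal form of the $\mathrm{GL}(\mathfrak{n})$-action $g\cdot\mu=g\,\mu(g^{-1}\cdot,g^{-1}\cdot)$ on $V$, so $\theta\colon\End(\mathfrak{n})\to\End(V)$ is a homomorphism of Lie algebras, $\theta(E)$ has adjoint $\theta(E')$ with respect to $(\cdot,\cdot)_{V}$, and $\theta(E)\mu=0$ if and only if $E\in\Der(\mathfrak{n})$.

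The one input I would use is the identity
\begin{equation}\label{momap}
\langle\Ric^{\mathfrak{n}},E\rangle=\tfrac14\,(\theta(E)\mu,\mu)_{V},\qquad E=E'\in\End(\mathfrak{n}).
\end{equation}
For $E=\Id$ this is just $\trace(\Ric^{\mathfrak{n}})=-\tfrac14(\mu,\mu)_{V}$, which is immediate from (\ref{riccAln}) since $\trace(\Ric^{\mathfrak{n}})=-\tfrac14\sum_{i}\trace(D_{i}'D_{i})=-\tfrac14\sum_{i,j}|[e_{i},e_{j}]|^{2}$. Granting (\ref{momap}), the result follows at once: the operator $[A,A']=AA'-A'A$ is symmetric, so by (\ref{momap}) together with the homomorphism and adjointness properties of $\theta$,
\begin{align*}
\langle\Ric^{\mathfrak{n}},[A,A']\rangle
&=\tfrac14\bigl(\theta(A)\theta(A')\mu-\theta(A')\theta(A)\mu,\;\mu\bigr)_{V}\\
&=\tfrac14\bigl(|\theta(A')\mu|_{V}^{2}-|\theta(A)\mu|_{V}^{2}\bigr).
\end{align*}
Since $A\in\Der(\mathfrak{n})$ we have $\theta(A)\mu=0$, whence $\langle\Ric^{\mathfrak{n}},[A,A']\rangle=\tfrac14|\theta(A')\mu|_{V}^{2}\ge0$, with equality precisely when $\theta(A')\mu=0$, that is, precisely when $A'\in\Der(\mathfrak{n})$.

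To make the argument self-contained I would derive the needed instance of (\ref{momap}) by a deformation trick. Since $A$ is a derivation, $e^{-tA}$ is an automorphism of $\mathfrak{n}$, hence $e^{-tA}\cdot\mu=\mu$; so with $g_{t}:=e^{tA'}e^{-tA}$ one has $g_{t}\cdot\mu=e^{tA'}\cdot\mu$, and the metric Lie algebra $(\mathfrak{n},(\cdot,\cdot),g_{t}\cdot\mu)$ is isometric to $(\mathfrak{n},(g_{t}\,\cdot,g_{t}\,\cdot),\mu)$. A second-order Baker--Campbell--Hausdorff computation, using $(A')'=A$, gives $g_{t}'g_{t}=\Id+t^{2}[A,A']+O(t^{3})$, so the path of inner products equals $(\,\cdot\,,\cdot\,)+t^{2}\,([A,A']\,\cdot\,,\cdot\,)+O(t^{3})$ and has vanishing first-order term; as the Lie group of $\mathfrak{n}$ is unimodular the divergence terms in the variation-of-scalar-curvature formula drop out, so the $t^{2}$-coefficient of $\operatorname{scal}$ along this family equals the first variation of $\operatorname{scal}$ in the direction $([A,A']\,\cdot\,,\cdot\,)$, namely $-\langle\Ric^{\mathfrak{n}},[A,A']\rangle$. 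On the other hand, from $e^{tA'}\cdot\mu=\mu+t\,\theta(A')\mu+\tfrac{t^{2}}{2}\theta(A')^{2}\mu+O(t^{3})$ the linear term $2(\mu,\theta(A')\mu)_{V}=2(\theta(A)\mu,\mu)_{V}$ and the cross term $(\mu,\theta(A')^{2}\mu)_{V}=(\theta(A)\mu,\theta(A')\mu)_{V}$ both vanish because $A\in\Der(\mathfrak{n})$, so $(g_{t}\cdot\mu,g_{t}\cdot\mu)_{V}=(\mu,\mu)_{V}+|\theta(A')\mu|_{V}^{2}\,t^{2}+O(t^{3})$; since the scalar curvature of $(\mathfrak{n},(\cdot,\cdot),g_{t}\cdot\mu)$ equals $-\tfrac14(g_{t}\cdot\mu,g_{t}\cdot\mu)_{V}$, comparing $t^{2}$-coefficients yields $\langle\Ric^{\mathfrak{n}},[A,A']\rangle=\tfrac14|\theta(A')\mu|_{V}^{2}\ge0$.

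The step I expect to be most delicate is pinning down the exact sign and constant in the first-variation formula $\frac{d}{dt}\big|_{0}\operatorname{scal}(h+tS)=-\langle\Ric,S\rangle$ for left-invariant metrics, and checking that the $\operatorname{div}\operatorname{div}S$ and $\Delta(\trace S)$ contributions really vanish for the unimodular group of $\mathfrak{n}$; the remaining ingredients --- $\trace(\Ric^{\mathfrak{n}})=-\tfrac14(\mu,\mu)_{V}$ from (\ref{riccAln}), the homomorphism and adjointness properties of $\theta$, the equivalence $\theta(E)\mu=0\Leftrightarrow E\in\Der(\mathfrak{n})$, and the second-order Baker--Campbell--Hausdorff expansion --- are routine.
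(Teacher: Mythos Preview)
The paper does not prove this proposition at all: it is quoted verbatim as Corollary~5 of \cite{Nik2007} and simply used as a black box in the proof of Claim~2. So there is no ``paper's own proof'' to compare against.

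Your argument is correct and is in fact the standard one (essentially Lauret's moment-map computation, which is also what underlies \cite{Nik2007}). The identity \eqref{momap} can be verified by a two-line computation directly from formula~(\ref{riccAln}): for symmetric $E$ and an orthonormal basis $\{e_i\}$ one has
\[
(\theta(E)\mu,\mu)_V=\sum_{i,j}\bigl(E[e_i,e_j]-[Ee_i,e_j]-[e_i,Ee_j],\,[e_i,e_j]\bigr)
=\sum_i\trace(D_iD_i'E)-2\sum_i\trace(D_i'D_iE)=4\,\langle\Ric^{\mathfrak n},E\rangle,
\]
which is considerably shorter and more robust than the deformation derivation you give in the second half. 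Your deformation argument also works, and the point you flagged as delicate is fine: on a \emph{unimodular} Lie group one has $\sum_i\Gamma^k_{ii}=\trace(\ad e_k)=0$ in a left-invariant orthonormal frame, so the divergence of every left-invariant $1$-form vanishes; hence $\operatorname{div}(\operatorname{div}S)=0$, while $\Delta(\trace S)=0$ because $\trace S$ is constant. So the $t^2$-coefficient of $\operatorname{scal}$ is indeed $-\langle\Ric^{\mathfrak n},[A,A']\rangle$, and equating with $-\tfrac14|\theta(A')\mu|_V^2$ gives the result with the equality characterization. If you want a self-contained write-up, I would replace the deformation paragraph by the direct verification of \eqref{momap} above.
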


\section{One important partial case}

The most difficult case in Theorem \ref{VRazAlg} is the case, when the derived algebra
$\mathfrak{n}$ has codimension $1$ in the Lie algebra $\mathfrak{s}$.
We supply the space $\End(\mathfrak{n})$
with the inner product $\langle A,B \rangle =\trace (AB')$, where $B'$ means the adjoint of the operator $A$
with respect to~$Q|_{\mathfrak{n}}$.

At first, we refine the formula (\ref{nnteor}) to this special case.
We have $m=1$ and we will use notations $f$ and $A$ instead of $f_1$ and $A_1$ respectively (see (\ref{adad})).
Obviously, $B_1$ is trivial because of $m=1$.
In the formula (\ref{nnteor}) for the matrix of the Ricci operator, we get
$$
R_1 = \Ric^\mathfrak{n} + \frac{1}{2}[A,A'] - t A^s, \quad
R_2 = -\frac{1}{2}( \sum\limits_{i=1}^{l}D'_iC_i),
$$
and the matrix $R_3$ consists of a unique element  $-r$, where
\begin{equation}\label{impr}
r = \trace(A^s A^s)=\langle A^s, A^s \rangle.
\end{equation}

\begin{remark}\label{eqeq}
It is easy to check that the $i$-th entry
of the column matrix $R_2$ is equal to
$$
-\frac{1}{2} \trace(D'_i \cdot A)=-\frac{1}{2} \trace(D_i \cdot A')=-\frac{1}{2} \langle D_i, A \rangle
$$
(see \cite{Cheb1} for details).
\end{remark}

Now, we consider the structural constants $C_{ij}^k$ of the Lie algebra $\mathfrak{n}$
with respect to the basis $\{e_1, \dots, e_l\}$, i.~e. $[e_i,e_j]=\sum_{k=1}^l C_{ij}^k e_k$ for all $i,j,k$.
It is clear that the $(j,k)$-th entry of the matrix $D_i$ is equal to $C_{ik}^j$.
By the formula (\ref{riccAln}) we get

\begin{equation}\label{scalnilp}
\trace(\Ric^\mathfrak{n})=-\frac{1}{4}\sum_{i=1}^l\trace(D_i D_i')= -\frac{1}{4}\sum\limits_{i,j,k} (C_{ij}^k)^2.
\end{equation}

{\it Further in this section we consider the case $r = \trace(A^s A^s)>0$, i.~e. the operator $A$ is not skew-symmetric}.
We will prove the following

\begin{pred}\label{VRazAlg1}
Let $(\mathfrak{s},Q)$ be a solvable metric Lie algebra, $\mathfrak{n}=[\mathfrak{s},\mathfrak{s}]$, $\mathfrak{a}$ be
a $Q$-orthogonal complement to $\mathfrak{n}$ in $\mathfrak{s}$, $\dim(\mathfrak{a})=1$, $r = \trace(A^s A^s)>0$.
Then one of the following two exclusive assertions holds:

\begin{enumerate}
\item The ideal $\mathfrak{n}$ is commutative and for every nontrivial $X\in \mathfrak{a}$
the operator $\ad(X)|_{\mathfrak{n}}$ is trace-free, normal, but not skew-symmetric
with respect to $Q$ (in this case the Ricci operator of $(\mathfrak{s},Q)$
has only one negative eigenvalue, while all other eigenvalues are zero);
\item The Ricci operator of the metric Lie algebra $(\mathfrak{s},Q)$ has at least two negative eigenvalues.
\end{enumerate}
\end{pred}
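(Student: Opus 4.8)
The plan is to read everything off from the block form of the Ricci operator already established for this codimension-one situation. In a $Q$-orthonormal basis $\{e_1,\dots,e_l,f\}$ adapted to $\mathfrak n=[\mathfrak s,\mathfrak s]=\mathrm{span}(e_1,\dots,e_l)$ and $\mathfrak a=\mathrm{span}(f)$, the Ricci operator has diagonal blocks $R_1=\Ric^{\mathfrak n}+\tfrac12[A,A']-tA^s$ and $(-r)$, and off-diagonal block the column $R_2$ whose $i$-th entry is $-\tfrac12\langle D_i,A\rangle$ (Remark \ref{eqeq}); here $A=\ad(f)|_{\mathfrak n}$, $D_i=\ad(e_i)|_{\mathfrak n}\in\InnDer(\mathfrak n)$, $t=\trace A\ge 0$ and $r=\langle A^s,A^s\rangle>0$. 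Since $(-r)$ is a negative-definite $1\times1$ principal submatrix, Corollary \ref{dopmat} already supplies one negative eigenvalue; the task is a second one, or else alternative~(1). I will use two facts throughout. Writing $\widetilde A:=P_{inner}(A)=\ad(Y)|_{\mathfrak n}$ for a suitable $Y\in\mathfrak n$ and $\widehat A:=A-\widetilde A$, the operator $\widehat A$ is $\langle\cdot,\cdot\rangle$-orthogonal to $\InnDer(\mathfrak n)$, so $\langle D_i,A\rangle=\langle D_i,\widetilde A\rangle$; hence $R_2=0$ whenever $\widetilde A=0$, in particular whenever $\mathfrak n$ is abelian. Also $\trace R_1=\trace\Ric^{\mathfrak n}-t^2=-\tfrac14\sum(C_{ij}^k)^2-t^2\le 0$, with equality if and only if $\mathfrak n$ is abelian and $t=0$.

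Suppose first that $\mathfrak n$ is abelian. Then every $D_i=0$, so $\Ric^{\mathfrak n}=0$ and $R_2=0$, and the Ricci operator equals $\diag(R_1,-r)$ with $R_1=\tfrac12[A,A']-tA^s$; its spectrum is that of $R_1$ together with $-r$. If $t>0$ then $\trace R_1=-t^2<0$, so $R_1$ has a negative eigenvalue and $\Ric$ has at least two. If $t=0$ then $R_1=\tfrac12[A,A']$ is symmetric of zero trace, hence either equal to $0$ (precisely when $A$ is normal) or indefinite with a negative eigenvalue; in the latter case $\Ric$ has at least two negative eigenvalues, and in the former $\Ric=\diag(0,\dots,0,-r)$ — which, since by hypothesis $A$ is not skew-symmetric, is exactly alternative~(1).

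Now suppose $\mathfrak n$ is not abelian. Then $\trace\Ric^{\mathfrak n}<0$, so $\trace R_1<0$ and $R_1$ has at least one negative eigenvalue, while by Theorem \ref{NilpAlg} the operator $\Ric^{\mathfrak n}$ has at least two. Since the restriction of the Ricci form to $\mathfrak n$ has matrix $R_1$, Corollary \ref{dopmat} lets me conclude once I exhibit a $2$-dimensional subspace of $\mathfrak s$ on which the Ricci form is negative definite. If $\widetilde A=0$ then $R_2=0$, $\Ric=\diag(R_1,-r)$, and any negative eigendirection of $R_1$ together with $f$ does the job. If $\widetilde A=\ad(Y)|_{\mathfrak n}\ne 0$ (so $Y\ne0$), I would try the plane $\mathrm{span}(Y,f)$: with $\Ric(f,Y)=-\tfrac12\langle\widetilde A,A\rangle=-\tfrac12\langle\widetilde A,\widetilde A\rangle=:-\tfrac12 w$ and $\Ric(f,f)=-r$, this plane is negative definite exactly when $-r\,\Ric(Y,Y)>\tfrac14 w^2$; and since $w\le 2r$ by Proposition \ref{vspom1} (whence $\tfrac{w^2}{4r}\le\tfrac w2$), it suffices to prove $\Ric(Y,Y)<-\tfrac w2$. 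I would get at this by expanding $\Ric(Y,Y)=(\Ric^{\mathfrak n}Y,Y)+\tfrac12([A,A']Y,Y)-t(A^sY,Y)$ via (\ref{riccAln}) and the identities $\ad(Y)(Y)=0$ (so $AY=\widehat AY$ and $(A^sY,Y)=(\widehat AY,Y)$) and $\sum_i\|D_iY\|^2=\langle\widetilde A,\widetilde A\rangle=w$, keeping Proposition \ref{riccider} and Lemma \ref{vnilp} available for the nilpotency of the $\ad$-operators involved.

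The genuinely hard part is this last sub-case — $\mathfrak n$ non-abelian with $\widetilde A\ne 0$, i.e. $R_2\ne 0$. There the perturbation $\tfrac12[A,A']-tA^s$ of $\Ric^{\mathfrak n}$ inside $R_1$, together with the coupling $R_2$, can cancel negative directions, so that neither ``$R_1$ has two negative eigenvalues'' nor ``$\mathrm{span}(Y,f)$ is $Q$-negative'' is automatic; concretely, $(\Ric^{\mathfrak n}Y,Y)$ carries a nonnegative term $\tfrac14\sum_i\|D_i'Y\|^2$ that resists a clean a priori bound. I expect the resolution to need a further case split — according to whether $A'\in\Der(\mathfrak n)$ (the equality case of Proposition \ref{riccider}), whether $\widehat A$ is a skew-symmetric derivation (the equality case of Proposition \ref{vspom1}), and the sizes of the numbers $\langle D_i,\widetilde A\rangle$ relative to $r$ — combined with a careful choice of the $2$-dimensional test subspace, placed inside the negative cone of $\Ric^{\mathfrak n}$ furnished by Theorem \ref{NilpAlg} while being nearly annihilated by $R_2$ and by $tA^s$. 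Carrying out this coordinated estimate is the technical core of the section.
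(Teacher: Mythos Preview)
Your treatment of the abelian case and of the non-abelian case with $R_2=0$ is correct and essentially matches the paper's Claim~1 (the case split there is $R_2=0$ versus $R_2\neq 0$, and your observation $R_2=0\Leftrightarrow\widetilde A=0$ is exactly what aligns your cases with theirs). The gap is the last sub-case, $\mathfrak n$ non-abelian and $R_2\neq 0$, which you yourself flag as unfinished. Two points here. First, invoking Theorem~\ref{NilpAlg} inside this section is circular in the paper's logic: that theorem is derived \emph{from} Proposition~\ref{VRazAlg1} (see Lemma~\ref{nilp} and Remark~\ref{last}). Second, fixing the test plane $\mathrm{span}(Y,f)$ is too rigid: in the boundary regime $w=2r$ (equivalently $\widehat A$ skew-symmetric, the equality case of Proposition~\ref{vspom1}) your sufficient condition becomes $\Ric(Y,Y)<-w/2$, while the expansion gives
\[
\Ric(Y,Y)=-\tfrac{w}{2}+\tfrac14\sum_i\|D_i'Y\|^2+\tfrac12\bigl(\|A'Y\|^2-\|\widehat A Y\|^2\bigr),
\]
and there is no mechanism forcing the last two terms to be negative.

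The paper does not pick a single test vector. It passes to the Schur complement $\widetilde R:=R_1+\tfrac1r R_2R_2'$ (Proposition~\ref{newpr1}) and computes its \emph{trace}, which amounts to averaging your test-plane condition over all $v\in\mathfrak n$. In a basis chosen so that $R_2$ has a single nonzero entry (this is where $\widetilde A$ enters), one has $\trace(R_2R_2')=\tfrac14\langle D_1,\widetilde A\rangle^2$, and the chain Cauchy--Schwarz $+$ Proposition~\ref{vspom1} $+$ the skew-symmetry $\sum_{i,j,k}(C_{ij}^k)^2\ge 2\sum_{j,k}(C_{1j}^k)^2$ yields
\[
r\,\trace\Ric^{\mathfrak n}+\trace(R_2R_2')\le 0,\qquad\text{hence}\qquad \trace\widetilde R\le -t^2.
\]
This already disposes of the non-unimodular case. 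In the unimodular equality case ($\trace\widetilde R=0$) the estimates force $\widetilde A=\lambda D_1$, $\widehat A$ skew, and $\Lin(e_2,\dots,e_l)$ an abelian ideal of $\mathfrak n$; the paper then rules out $\widetilde R=0$ by a contradiction that hinges on Proposition~\ref{riccider} (forcing $A'\in\Der(\mathfrak n)$ once $[A,A']$ must vanish) together with Lemma~\ref{vspom2} (giving $\langle A,D_1\rangle=0$, which contradicts $R_2\neq 0$). That final structural step---not a sharper choice of $2$-plane---is what actually closes the argument; your projected case split points toward Propositions~\ref{vspom1} and~\ref{riccider} but misses the role of the $L_2/L_3$ lemmas.
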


First, we consider a matrix
\begin{equation}
L = \left( {{\begin{array}{*{20}c}
I  & \frac{1}{r} \cdot R_2 \\
0  & 1 \\
 \end{array} }} \right),
\end{equation}
where $I$ is the identity matrix, and the matrix

\begin{equation}
\overline{\Ric} =L \cdot \Ric \cdot L'=\left( {{\begin{array}{cc}
R_1 +\frac{1}{r} \cdot R_2 R_2' & 0 \\
0  & -r \\
 \end{array} }} \right).
\end{equation}

By the law of inertia for quadratic forms, the matrices $\overline{\Ric}$ and
${\Ric}$ have one and the same signature. But the matrix $\overline{\Ric}$
is block-diagonal (with a negative entry in the last block) and we immediately get

\begin{pred}\label{newpr1}
The operator ${\Ric}$ (for $r>0$) has at least two negative eigenvalues if and only if
the matrix
\begin{equation}\label{newmatr1}
R_1 +\frac{1}{r} \cdot R_2 R_2'=\Ric^\mathfrak{n} +
\frac{1}{2}[A,A'] - t A^s +\frac{1}{r} \cdot R_2 R_2'
\end{equation}
has at least one negative eigenvalue.
\end{pred}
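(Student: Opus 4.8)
The plan is to reduce the statement, via congruence of symmetric matrices, to counting the negative eigenvalues of a block-diagonal matrix. First I would note that the matrix $L$ is invertible: it is block upper-triangular with identity diagonal blocks, so $\Det(L)=1$. Consequently $\overline{\Ric}=L\cdot\Ric\cdot L'$ is congruent (not merely similar) to $\Ric$, and by the law of inertia for quadratic forms the two symmetric matrices have the same signature; in particular, $\Ric$ and $\overline{\Ric}$ have the same number of negative eigenvalues.

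Next I would verify the block identity $\overline{\Ric}=\diag\bigl(R_1+\tfrac1r\,R_2R_2',\,-r\bigr)$ by a direct $2\times 2$ block multiplication, using that $R_3$ is the $1\times 1$ matrix $(-r)$ (which holds since $\dim\mathfrak a=1$, cf. (\ref{impr})): one computes that $L\cdot\Ric$ has block rows $\bigl(R_1+\tfrac1r\,R_2R_2',\,0\bigr)$ and $(R_2',\,-r)$, and right multiplication by $L'$ then cancels the surviving off-diagonal block, leaving exactly the asserted block-diagonal form.

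Finally, the eigenvalues of $\overline{\Ric}$ are the eigenvalues of its two diagonal blocks taken together. The $1\times 1$ block contributes precisely one negative eigenvalue, because $r>0$ by hypothesis. Hence $\overline{\Ric}$ — and therefore $\Ric$ — has at least two negative eigenvalues if and only if the $l\times l$ block $R_1+\tfrac1r\,R_2R_2'$ has at least one negative eigenvalue; and by the formula for $R_1$ in this special case, this block equals $\Ric^{\mathfrak n}+\tfrac12[A,A']-tA^s+\tfrac1r\,R_2R_2'$, which is (\ref{newmatr1}). This proves the proposition.

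There is no substantial difficulty in this argument: it is a standard congruence/inertia computation. The only points requiring care are that the transformation is by congruence rather than conjugation (so one must cite the law of inertia, not an eigenvalue-preservation statement), and the bookkeeping in the block product $L\cdot\Ric\cdot L'$.
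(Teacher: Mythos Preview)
Your proof is correct and follows essentially the same approach as the paper: introduce the invertible matrix $L$, observe that $\overline{\Ric}=L\,\Ric\,L'$ is block-diagonal with blocks $R_1+\tfrac{1}{r}R_2R_2'$ and $-r$, and invoke the law of inertia. Your write-up simply makes explicit a few bookkeeping steps (invertibility of $L$, the block multiplication, congruence vs.\ conjugation) that the paper leaves to the reader.
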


Note that a symmetric matrix with negative trace has at least one negative eigenvalues.
Therefore, we get

\begin{pred}\label{newpr2}
If $r>0$ and
$$
\trace(\Ric^\mathfrak{n})-t^2+\frac{1}{r} \trace(R_2 R_2')<0,
$$
then the operator ${\Ric}$ has at least two negative eigenvalues.
\end{pred}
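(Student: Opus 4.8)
The plan is to derive this statement immediately from Proposition~\ref{newpr1}, together with the elementary fact noted just above it: a real symmetric matrix with negative trace has at least one negative eigenvalue (the trace being the sum of the eigenvalues). By Proposition~\ref{newpr1}, since $r>0$, the operator $\Ric$ has at least two negative eigenvalues exactly when the symmetric matrix
$$
R_1 + \frac{1}{r}\, R_2 R_2' = \Ric^\mathfrak{n} + \frac{1}{2}[A,A'] - t A^s + \frac{1}{r}\, R_2 R_2'
$$
has at least one negative eigenvalue. So it suffices to show that, under the hypothesis of the proposition, the trace of this matrix is negative.

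First I would take traces term by term. Using $\trace([A,A']) = \trace(AA') - \trace(A'A) = 0$ and $\trace(A^s) = \trace(A) = t$ (recall the normalization $t=\trace(A_1)=\trace(A_1^s)\geq 0$, and here $A=A_1$), one obtains
$$
\trace\!\left(R_1 + \frac{1}{r}\, R_2 R_2'\right) = \trace(\Ric^\mathfrak{n}) - t^2 + \frac{1}{r}\,\trace(R_2 R_2').
$$
By hypothesis the right-hand side is negative, so $R_1 + \frac{1}{r} R_2 R_2'$ has a negative eigenvalue, and Proposition~\ref{newpr1} yields the conclusion.

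There is essentially no obstacle here: the statement is a one-line corollary of Proposition~\ref{newpr1} once the trace of $R_1 + \frac{1}{r}R_2R_2'$ is computed. The only points needing (trivial) verification are that the commutator $[A,A']$ is trace-free and that $A^s$ has the same trace $t$ as $A$; both are immediate from the definitions and from the choice of the basis of $\mathfrak{a}$ fixed earlier. (One may also note $\trace(R_2R_2')\geq 0$, so the hypothesis in particular forces $\trace(\Ric^\mathfrak{n})<t^2$; this is not needed for the argument but clarifies why the condition is meaningful only for $\mathfrak{s}$ with sufficiently ``large'' $t$ or sufficiently negative nilpotent scalar curvature.)
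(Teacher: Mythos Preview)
Your proof is correct and follows exactly the approach of the paper: the paper derives Proposition~\ref{newpr2} as an immediate consequence of Proposition~\ref{newpr1} together with the remark that a symmetric matrix with negative trace has a negative eigenvalue. Your explicit computation of the trace (using $\trace([A,A'])=0$ and $\trace(A^s)=t$) merely spells out what the paper leaves implicit.
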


\begin{remark}\label{nonuniprop}
For non-unimodular Lie algebras, the inequality from the above proposition could be replaced by the inequality
\begin{equation}\label{ineqpr}
r\cdot \trace(\Ric^\mathfrak{n})+\trace(R_2 R_2')\leq 0,
\end{equation}
since $t>0$ for such algebras.
\end{remark}

\medskip

In the remainder of this section we {\it prove Proposition \ref{VRazAlg1}}.
We consider {\it two variants: {\bf $R_2=0$} and {\bf $R_2\neq 0$}.}

\begin{claim}\label{proofr20}
Proposition \ref{VRazAlg1} is valid for $R_2=0$.
\end{claim}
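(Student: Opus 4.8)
The plan is to reduce everything to the single block $R_1$ via Proposition~\ref{newpr1}. Since we assume $R_2=0$, the auxiliary matrix $L$ used to form $\overline{\Ric}$ is the identity, so $\overline{\Ric}=\Ric$ takes the block form
$$
\Ric=\left(\begin{array}{cc} R_1 & 0\\ 0 & -r\end{array}\right),\qquad
R_1=\Ric^{\mathfrak{n}}+\frac12[A,A']-tA^{s},
$$
and, by Proposition~\ref{newpr1}, the Ricci operator has at least two negative eigenvalues precisely when the symmetric matrix $R_1$ has at least one negative eigenvalue. So the whole claim becomes a question about $R_1$, and the first invariant I would look at is $\trace(R_1)$.

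Next I would compute this trace. Since $\trace[A,A']=\trace(AA')-\trace(A'A)=0$ and $\trace A^{s}=\trace A=t$, we get $\trace(R_1)=\trace(\Ric^{\mathfrak{n}})-t^{2}$, and by formula~(\ref{scalnilp}) this equals $-\frac14\sum_{i,j,k}(C_{ij}^{k})^{2}-t^{2}\le 0$. If $\trace(R_1)<0$, then $R_1$ is a symmetric matrix with negative trace, hence has a negative eigenvalue, and we are in assertion~(2) of Proposition~\ref{VRazAlg1}. So the only remaining situation is the boundary case $\trace(R_1)=0$.

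In that case both non-positive summands must vanish: $\trace(\Ric^{\mathfrak{n}})=0$, which by~(\ref{scalnilp}) forces all structural constants $C_{ij}^{k}$ to vanish, i.e.\ $\mathfrak{n}$ is commutative (so $\Ric^{\mathfrak{n}}=0$), and $t=\trace A=0$, i.e.\ $A$ is trace-free. Then $R_1=\frac12[A,A']$, which is symmetric (a difference of the symmetric matrices $AA'$ and $A'A$) and trace-free. If $[A,A']\neq 0$, such a matrix is indefinite, hence has a negative eigenvalue, and again assertion~(2) holds. If $[A,A']=0$, then $A$ is normal, $R_1=0$, so $\Ric=\diag(0,\dots,0,-r)$ with $r=\langle A^{s},A^{s}\rangle>0$; the latter is exactly the statement that $A$ is not skew-symmetric. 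Hence $\mathfrak{n}$ is commutative, and for any nontrivial $X\in\mathfrak{a}$ the operator $\ad(X)|_{\mathfrak{n}}$ is a nonzero scalar multiple of $A$, so it is trace-free, normal, but not skew-symmetric with respect to $Q$, while $\Ric$ has exactly one negative eigenvalue and all others zero — this is assertion~(1).

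I do not expect a real obstacle here: the case $R_2=0$ is the easy half of Proposition~\ref{VRazAlg1}, and the only point demanding a bit of care is the equality case $\trace(R_1)=0$, where one must use the precise collapse $R_1=\frac12[A,A']$ together with the elementary fact that a nonzero trace-free real symmetric matrix is indefinite. The substantive difficulty of the proposition — and the reason it is split into subcases — lies entirely in the complementary situation $R_2\neq 0$, to be treated afterwards.
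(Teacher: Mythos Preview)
Your proposal is correct and follows essentially the same approach as the paper: reduce to $R_1$ via Proposition~\ref{newpr1}, compute $\trace(R_1)=-\tfrac14\sum(C_{ij}^k)^2-t^2$, and in the equality case use that $R_1=\tfrac12[A,A']$ is symmetric and trace-free to split into the normal and non-normal subcases. The only cosmetic difference is that you spell out explicitly why $L=\Id$ and why $[A,A']$ is symmetric, which the paper leaves implicit.
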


\begin{proof}
By Proposition \ref{newpr1} (and because of $R_2=0$), ${\Ric}$ has at least two negative eigenvalues if and only if
the matrix
\begin{equation}\label{vsvsvs}
\Ric^\mathfrak{n} + \frac{1}{2}[A,A'] - t A^s
\end{equation}
has at least one negative eigenvalue.
The trace of this matrix is (see (\ref{scalnilp}))
$$
-\frac{1}{4}\sum\limits_{i,j,k} (C_{ij}^k)^2-t^2
$$
(since $\trace([A,A'])=0$ and $\trace(A^s)=\trace(A)=t$).
If this trace is not zero, then ${\Ric}$ has at least two negative eigenvalues.

Now, suppose that this trace is zero. Then $t=0$ and $\mathfrak{n}$ is abelian
(this implies $\Ric^\mathfrak{n}=0$ in particular).
Further, if $A$ is not normal, then the operator (\ref{vsvsvs}), having the form
$\frac{1}{2}[A,A']$,
is trace-free and non-zero. Hence, it has at least one negative eigenvalue and
${\Ric}$ has at least two negative eigenvalues by Proposition \ref{newpr1}.
On the other hand, if $A$ is normal, then the operator
(\ref{vsvsvs}) is zero and ${\Ric}$ has only one negative eigenvalue, while all other its eigenvalues are zero.
This proves Proposition \ref{VRazAlg1} for $R_2=0$.
\end{proof}

Now, we consider a much more technically involved claim.

\begin{claim}\label{proofr2n0}
If $R_2\neq 0$ then the assertion (2) of Proposition \ref{VRazAlg1} is valid.
\end{claim}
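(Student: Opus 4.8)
The plan is to show that when $R_2 \neq 0$, the $(l\times l)$-matrix $M := \Ric^\mathfrak{n} + \frac12[A,A'] - tA^s + \frac1r R_2R_2'$ has at least one negative eigenvalue, and then invoke Proposition \ref{newpr1}. Since adding the positive semidefinite term $\frac1r R_2R_2'$ can only raise eigenvalues, the naive trace bound of Claim \ref{proofr20} is no longer enough, so the argument has to exploit the special structure of $R_2$. Recall from Remark \ref{eqeq} that the $i$-th entry of $R_2$ is $-\frac12\langle D_i, A\rangle$, so $\frac1r R_2R_2'$ is a rank-one matrix whose nonzero eigenvalue is $\frac{1}{4r}\sum_i \langle D_i,A\rangle^2$. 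The key identity to establish is a relation between $\sum_i\langle D_i, A\rangle^2$ (the ``leakage'' from $R_2$) and the quantities $\trace(\Ric^\mathfrak{n})$, $\trace([A,A'])=0$, and $t^2$ that appear in the trace of $M$; in other words, I expect to need a sharpened version of Proposition \ref{newpr2} adapted to the rank-one structure.

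First I would reduce to computing $(Mv,v)$ for a cleverly chosen test vector $v$, rather than the full trace. The natural candidates are: (i) $v = R_2$ itself (or its normalization), which makes the $\frac1r R_2R_2'$ contribution explicit as $\frac1r |R_2|^2$; and (ii) a vector adapted to the derivation structure — since $\ad(f)|_\mathfrak{n}=A$ is a derivation of $\mathfrak{n}$, Lemmas \ref{derstruc}, \ref{vspom1n}, and especially Propositions \ref{vspom1} and \ref{riccider} control how $\Ric^\mathfrak{n}$, $[A,A']$, and $A^s$ interact. Proposition \ref{riccider} gives $\langle \Ric^\mathfrak{n}, [A,A']\rangle \geq 0$ with equality iff $A'\in\Der(\mathfrak{n})$, and Proposition \ref{vspom1} gives $\langle A^s,A^s\rangle = r \geq \frac12\langle\widetilde A,\widetilde A\rangle$ with $\widetilde A = P_{inner}(A)$; these are the tools that let one trade the troublesome $+\frac1r R_2R_2'$ term against genuinely negative contributions. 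I would pair these with the identity $\langle D_i, A\rangle = \langle D_i, A^s\rangle$ (valid since each $D_i$ — being $\ad(e_i)|_\mathfrak{n}$ up to the block structure — pairs with the symmetric part, using Lemma \ref{vnilp} on nilpotency of the $D_i$) to rewrite $|R_2|^2 = \frac14\sum_i\langle D_i,A^s\rangle^2$, and then bound this sum via Cauchy–Schwarz by $\frac14\langle A^s,A^s\rangle \cdot \sum_i\langle \widehat D_i,\widehat D_i\rangle$ for appropriate projections $\widehat D_i$, relating $\sum_i|\widehat D_i|^2$ back to $-4\trace(\Ric^\mathfrak{n})$ via formula (\ref{scalnilp}).

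The main obstacle, I expect, is precisely this: after dividing by $r = |A^s|^2$, the term $\frac1r|R_2|^2$ is bounded above by something like $\frac{1}{16r}|A^s|^2 \cdot (-4\trace\Ric^\mathfrak{n})= -\frac14\trace\Ric^\mathfrak{n}$, which exactly matches the scale of $\trace\Ric^\mathfrak{n}$ in $\trace M$, so the crude estimate only gives $\trace M \leq -t^2 \leq 0$ — borderline, not strictly negative. To push past the borderline case one has to analyze when equality holds throughout: equality in Cauchy–Schwarz forces all $D_i$ to be proportional to $A^s$ in a precise sense, equality in Proposition \ref{riccider} forces $A'\in\Der(\mathfrak{n})$, hence (by Lemma \ref{vspom1n}) $A,A'\in L_2$ and $[A,A']$ has a controlled block form, and equality in Proposition \ref{vspom1} forces $\widehat A = A - \widetilde A$ skew-symmetric. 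Combining these rigidity statements should force $\mathfrak{n}$ abelian, $t=0$, and $A$ normal — but that is exactly case (1) of Proposition \ref{VRazAlg1}, which is excluded once $R_2\neq 0$ (if $\mathfrak{n}$ is abelian then all $D_i=0$, so $R_2=0$, a contradiction). Therefore in the genuinely remaining situation one of the inequalities is strict, $\trace M < 0$ or a test vector gives $(Mv,v)<0$, and $M$ has a negative eigenvalue; then Proposition \ref{newpr1} finishes the claim. I would organize the write-up as: (a) the reduction via Remark \ref{eqeq} and the identity $\langle D_i,A\rangle=\langle D_i,A^s\rangle$; (b) the Cauchy–Schwarz / nilpotency estimate bounding $\frac1r|R_2|^2$; (c) the equality-case rigidity analysis ruling out everything but the abelian-normal situation; (d) the contradiction with $R_2\neq 0$ and the conclusion.
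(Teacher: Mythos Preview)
Your overall architecture --- bound $\trace(R_2R_2')$ against $-r\trace(\Ric^\mathfrak{n})$ to get $\trace M\le -t^2$, then analyze the equality case --- is the same as the paper's. But two of your steps do not go through as written.

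First, the identity $\langle D_i,A\rangle=\langle D_i,A^s\rangle$ is false in general; nilpotency of $D_i$ does not kill the pairing with the skew part of $A$. The correct identity is $\langle D_i,A\rangle=\langle D_i,\widetilde A\rangle$ with $\widetilde A=P_{inner}(A)$, which holds simply because $D_i\in\InnDer(\mathfrak n)$. This matters for the Cauchy--Schwarz step: summing $\langle D_i,\widetilde A\rangle^2\le|D_i|^2|\widetilde A|^2$ over $i$ and using Proposition~\ref{vspom1} gives only $4\trace(R_2R_2')\le 2r\sum_i|D_i|^2$, which is too weak by a factor of $2$. The paper recovers this factor by first choosing an orthonormal basis of $\mathfrak n$ so that $\langle D_i,\widetilde A\rangle=0$ for $i\ge 2$; then only the single term $\langle D_1,\widetilde A\rangle^2\le |D_1|^2|\widetilde A|^2$ survives, and the antisymmetry $C_{1j}^k=-C_{j1}^k$ yields $|D_1|^2\le\frac12\sum_{i,j,k}(C_{ij}^k)^2$, supplying exactly the missing factor.

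Second, and more seriously, your rigidity analysis is too quick. Equality throughout does \emph{not} force $\mathfrak n$ abelian: it forces only that (i) $\widetilde A=\lambda D_1$ for some $\lambda\neq 0$, (ii) $A-\widetilde A$ is skew, and (iii) $C_{ij}^k=0$ whenever $1\notin\{i,j\}$, so $\Lin(e_2,\dots,e_l)$ is an abelian codimension-one ideal (Lemma~\ref{vspom0}) --- but $D_1$ itself need not vanish, so $R_2\neq 0$ is still consistent. The contradiction is obtained differently: from $\trace M=0$ and $M$ positive semidefinite one gets $M=0$ as a matrix; an explicit computation of the $(1,1)$ entry using (i)--(iii) shows $([A,A'])_{11}=0$; multiplying the matrix equation $M=0$ by $[A,A']$ and taking traces, Proposition~\ref{riccider} then forces $[A,A']=0$ and $A'\in\Der(\mathfrak n)$. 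Only now does Lemma~\ref{vspom1n} give $A\in L_2$, and since $D_1\in L_3$, Lemma~\ref{vspom2} yields $\langle A,D_1\rangle=0$ --- contradicting $\langle A,D_1\rangle=\langle\widetilde A,D_1\rangle=\lambda|D_1|^2\neq 0$. Your sketch lists the right lemmas but jumps to ``$\mathfrak n$ abelian'' where a genuinely delicate two-step argument (first $M=0$ as a matrix, then the $L_2$/$L_3$ orthogonality) is needed.
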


\begin{proof}
Note that the value
$$ \trace(R_2 R_2')=\frac{1}{4}
\sum\limits_{i=1}^l (\trace(D_i \cdot A'))^2
$$
(see Remark \ref{eqeq}) does not depend on the choice of an orthonormal basis $\{e_i\}$,  $1 \leq
i \leq l$, in $\mathfrak{n}$. This assertion has been proved in \cite{NikCheb},
but we reproduce here a short argument for the convenience of the reader.
Consider another orthonormal basis
$\{\overline{e}_i\}$, $1 \leq i \leq l$, in $\mathfrak{n}$. Then $\overline{e}_i=\sum_j
q_{ji} e_j$ for all $i$, where $(q_{ji})$ is an orthogonal matrix. Therefore,
\begin{eqnarray*}
\overline{D}_i=\ad(\overline{e}_i)=\sum_j q_{ji} D_j,\\
\trace(\overline{D}_i A')=\sum_j q_{ji} \trace(D_j A'),\\
(\trace(\overline{D}_i A'))^2=\sum_{j,k} q_{ji} q_{ki} \trace(D_j
A')\trace(D_k A'),\\
\sum_i(\trace(\overline{D}_i A'))^2=\sum_{i,j,k} q_{ji} q_{ki}
\trace(D_j A')\trace(D_k A')= \\
\sum_{j,k}\left(\trace(D_jA')\trace(D_k A')\sum_{i} q_{ji} q_{ki}\right)=\\
\sum_{j,k}\ \delta_{jk} \trace(D_j A')\trace(D_k A')=
\sum_{j}(\trace(D_j A'))^2.
\end{eqnarray*}

Hence, we may use some special
basis $ \{e_i\}$ (in $\mathfrak{n}$) in order to get more suitable expressions for $R_2$ and $\trace(R_2 R_2')$.

In the linear space $\Der(\mathfrak{n}) \subset \End(\mathfrak{n})$
of derivations of $\mathfrak{n}$, we consider a
subspace $\InnDer(\mathfrak{n})$ of inner derivations.
We will use the projection
$P_{inner}: \Der(\mathfrak{n}) \rightarrow \InnDer(\mathfrak{n})$
(see (\ref{eqproj})).
Let us consider $\widetilde{A}=P_{inner}(A)$.

Let $\mathfrak{l}$ be a subspace of codimension $1$ in
$\mathfrak{n}$ such that for any $X\in \mathfrak{l}$
the inner derivation $\ad(X)$ lies in the orthogonal complement to
$\mathbb{R} \cdot \widetilde{A}$ with respect to inner product $\langle \cdot ,\cdot \rangle$.

Now we choose a
$Q$-orthonormal basis $\{e_i\}$ in $\mathfrak{n}$ such that
$e_i \in \mathfrak{l}$ for $i \geq 2$.
Hence, we get
$\langle D_i,A\rangle=\trace(D_i A')=0$ for $i\geq 2$.
Recall, that $i$-th entry of the column matrix $R_2$ is equal to
$-\frac{1}{2} \langle D_i, A \rangle$ (see Remark \ref{eqeq}). Since we suppose $R_2 \neq 0$,
then
$\langle D_1,A\rangle=\trace(D_1 A')\neq 0$.
Therefore,
\begin{equation}\label{bnonz1}
R'_2=\Bigl(-\frac{1}{2} \langle D_1, A \rangle,0,0,\dots,0\Bigr),
\end{equation}
\begin{equation}\label{bnonz2}
R_2R'_2=\diag\Bigl(\frac{1}{4} {\langle D_1, A \rangle}^2, 0,0,\dots,0 \Bigr),
\end{equation}
\begin{equation}\label{bnonz}
4 \trace(R_2 R_2')=(\trace(D_1 \cdot A'))^2={\langle D_1,A\rangle}^2.
\end{equation}
Now we will prove the inequality (\ref{ineqpr}) and study possibilities when it becomes an equality.

Recall that
$\trace(\Ric^\mathfrak{n})=-\frac{1}{4}\sum\limits_{i,j,k} (C_{ij}^k)^2$  by (\ref{scalnilp}),
On the other hand, by (\ref{bnonz}) and by the Cauchy–Bunyakovsky–Schwarz inequality we get
\begin{eqnarray*}
4\trace(R_2 R_2')=
{\langle D_1,A\rangle}^2 ={\langle D_1,\widetilde{A}\rangle}^2\leq {\langle D_1,D_1\rangle}
{\langle\widetilde{A},\widetilde{A}\rangle}=\\
{\langle\widetilde{A},\widetilde{A}\rangle}\trace(D_1 \cdot D_1')
={\langle\widetilde{A},\widetilde{A}\rangle}\sum_{j,k} (C^j_{1k})^2,
\end{eqnarray*}
where the inequality becomes an equality if and only if $\widetilde{A}=\lambda D_1$ for some $\lambda \in \mathbb{R}$
($D_1\neq 0$ by $R_2 \neq 0$, ${\langle D_1,\widetilde{A}\rangle}^2={\langle D_1,D_1\rangle}
{\langle\widetilde{A},\widetilde{A}\rangle}$ iff $\widetilde{A}$ is proportional to $D_1$).

Therefore,
\begin{eqnarray*}
r\cdot \trace(\Ric^\mathfrak{n})+\trace(R_2 R_2')=\\
-\frac{1}{4}\trace(A^s \cdot A^s)\sum\limits_{i,j,k} (C_{ij}^k)^2+
\frac{1}{4}{\langle D_1,\widetilde{A}\rangle}^2 = \\
-\frac{1}{4}\left({\langle A^s, A^s \rangle}\sum\limits_{i,j,k} (C_{ij}^k)^2-
{\langle D_1,\widetilde{A} \rangle}^2 \right)\leq \\
-\frac{1}{4}\left({\langle A^s, A^s \rangle}\sum\limits_{i,j,k} (C_{ij}^k)^2-
{\langle\widetilde{A},\widetilde{A}\rangle} {\langle D_1,D_1\rangle} \right)\leq \\
-\frac{1}{4}\left(\frac{1}{2}{\langle\widetilde{A},\widetilde{A}\rangle} \sum\limits_{i,j,k} (C_{ij}^k)^2-
{\langle\widetilde{A},\widetilde{A}\rangle} {\langle D_1,D_1\rangle} \right)= \\
-\frac{1}{8} {\langle\widetilde{A},\widetilde{A}\rangle} \left(\sum\limits_{i,j,k}
(C_{ij}^k)^2 - 2\sum_{j,k} (C^j_{1k})^2\right) \leq 0,
\end{eqnarray*}
since $C_{1j}^k=-C_{j1}^k$, $\sum\limits_{i,j,k}
(C_{ij}^k)^2 \geq 2 \sum\limits_{j,k} (C^k_{1j})^2$ and
$2{\langle A^s, A^s \rangle}\geq {\langle\widetilde{A},\widetilde{A}\rangle}$ by Proposition \ref{vspom1}.
If $\mathfrak{s}$ is non-unimodular (i.~e. $t=\trace(A)\neq 0$), then we
get that $\Ric$ has at least two negative eigenvalue
by Proposition \ref{newpr2} and Remark \ref{nonuniprop}.
\medskip

Now, we suppose that $\Ric$ {\it has at most one negative eigenvalue}.
This implies $t=\trace(A)=0$ and the equality $r\cdot \trace(\Ric^\mathfrak{n})+\trace(R_2 R_2')=0$.
From the above arguments we get that the latter equality holds if and only if
$2{\langle A^s, A^s \rangle}={\langle\widetilde{A},\widetilde{A}\rangle}$,
$\widetilde{A}=\lambda D_1$ for some $\lambda \in \mathbb{R}$ and $C_{ij}^k=0$ for $1 \not\in\{i,j\}$ simultaneously.
Then by Proposition \ref{vspom1} we see that $A-\widetilde{A}$ is a skew-symmetric derivation of $\mathfrak{n}$.
Since $A$ is not skew-symmetric, then $\widetilde{A}\neq 0$ and $\lambda \neq 0$.
If $C_{ij}^k=0$ for $1 \not\in\{i,j\}$, then $\Lin\{e_2,e_3,\dots,e_l\}$ is an abelian subalgebra of codimension $1$
in $\mathfrak{n}$. By Lemma \ref{vspom0} we get that $\Lin\{e_2,e_3,\dots,e_l\}$ is an ideal in
$\mathfrak{n}$. Therefore, $C_{ij}^1=0$ for all $i,j$.

Let us consider the matrix (\ref{newmatr1}) more closely, put
$$
\widetilde{R}:=\Ric^\mathfrak{n} +
\frac{1}{2}[A,A'] +\frac{1}{r} \cdot R_2 R_2'.
$$
By Proposition \ref{newpr1} it has no negative eigenvalue.
Since $\trace(\widetilde{R})=\trace(\Ric^\mathfrak{n})+1/r\cdot\trace(R_2 R_2')=0$, it means that
$\widetilde{R}$ is the zero matrix. Now we will prove, that the latter is {\it impossible} (under the above conditions).

{\it Let us suppose the contrary, i.~e. $\widetilde{R}=0$}.
At first, consider $\widetilde{R}_{11}$,
the $(1,1)$-th entry of $\widetilde{R}$.
By (\ref{riccAln}) we get
$$\Ric^{\mathfrak{n}} = -\frac{1}{2}
\sum\limits_{i=1}^{l} D_i^{\prime}D_i +\frac{1}{4} \sum\limits_{i=1}^{l}D_iD_i^{\prime}.
$$
Since $C_{ij}^1=0$ for all $i$ and $j$, the first column of the matrix $D_1$ and
the first row of every matrix $D_i$ ($1\leq i \leq s$) are zero. Therefore,
$\left(D_1^{\prime}D_1 \right)_{11}=\left(D_iD_i^{\prime} \right)_{11}=0$ for all $1\leq i \leq s$ and
$$
\left(D_i^{\prime}D_i \right)_{11}=\sum\limits_{j=1}^l (C_{i1}^j)^2=\sum\limits_{j=1}^l (C_{1i}^j)^2
$$
for $i \geq 2$. Therefore,
$\left( \Ric^{\mathfrak{n}}\right) _{11}=-\frac{1}{2}\sum\limits_{i,j=1}^l (C_{1i}^j)^2=
-\frac{1}{2}\langle D_1, D_1 \rangle$.

Since $\widetilde{A}=\lambda D_1$ and $A-\widetilde{A}$ is skew-symmetric, then
$A^s=\widetilde{A}^s=\lambda D_1^s$.
Since $D_1$ is a nilpotent operator, then by Lemma \ref{vnilp} we get
$$
r=\langle A^s, A^s \rangle = \lambda^2 \langle D_1^s,D_1^s \rangle =\frac{\lambda^2}{2} \langle D_1,D_1 \rangle.
$$
By (\ref{bnonz2})  we get
$(R_2R'_2)_{11}=\frac{1}{4}{\langle D_1, A \rangle}^2=\frac{1}{4}{\langle D_1, \widetilde{A} \rangle}^2=
\frac{1}{4}{\langle D_1, \lambda D_1 \rangle}^2=\frac{\lambda^2}{4} {\langle D_1, D_1 \rangle}^2$.
Hence,
$$
\left(\frac{1}{r}R_2R'_2\right)_{11}=\frac{1}{2}\langle D_1, D_1 \rangle
$$
and
\begin{equation}\label{aazero}
\widetilde{R}_{11}=-\frac{1}{2}\langle D_1, D_1 \rangle+\frac{1}{2}([A,A'])_{11}+\frac{1}{2}\langle D_1, D_1 \rangle=
\frac{1}{2}([A,A'])_{11}=0.
\end{equation}
Now we multiply the matrix equality
$$
\Ric^\mathfrak{n} +
\frac{1}{2}[A,A'] +\frac{1}{r} \cdot R_2 R_2'=0
$$
by the matrix $[A,A']$ from the right and calculate the traces of both sides:
$$
\trace(\Ric^\mathfrak{n}\cdot [A,A'])+\frac{1}{2}\trace([A,A']\cdot [A,A'])+
\frac{1}{r} \trace(R_2 R_2' \cdot [A,A'])=0.
$$
Recall that
$R_2R'_2=\diag\Bigl(\frac{1}{4} {\langle D_1, A \rangle}^2, 0,0,\dots,0 \Bigr)$ by (\ref{bnonz2}) and
$([A,A'])_{11}=0$ by (\ref{aazero}), hence, $\trace(R_2 R_2' \cdot [A,A'])=0$. By Proposition
\ref{riccider} we get the inequality \linebreak
$\trace(\Ric^\mathfrak{n}\cdot [A,A'])\geq 0$, that becomes an equality if and only if $A'\in \Der (\mathfrak{n})$.
Since $\trace([A,A']\cdot [A,A'])=\langle [A,A'],[A,A'] \rangle \geq 0$, then $[A,A']=0$,
$\trace(\Ric^\mathfrak{n}\cdot [A,A'])=0$,
and $A'\in \Der (\mathfrak{n})$.

By Lemma \ref{vspom1n} we get $A \in L_2$ (see the first section for the definitions of $L_i$).
Since $D_1 \in \InnDer(\mathfrak{n})$, then $D_1 \in L_3$ by Corollary
\ref{subder}. Finally, by Lemma \ref{vspom2} we get
$$
0=\langle A, D_1 \rangle = \langle \widetilde{A}, D_1 \rangle=\lambda \langle  D_1, D_1 \rangle.
$$
But this is impossible, since $\lambda \neq 0$ and $D_1 \neq 0$.
Therefore, $\widetilde{R}$ is not a zero matrix. This contradiction proves the claim.
\end{proof}

Therefore, we have proved
Proposition \ref{VRazAlg1}.

\section{Proof of the main results}

In this section we prove Theorem \ref{VRazAlg} in full generality.
Consider a subspace
$$
\widetilde{\mathfrak{a}}=\{X \in \mathfrak{a} \,|\, \ad (X)\,\, \mbox{is skew-symmetric in} \,\, (\mathfrak{s},Q)\}.
$$
There are three mutually exclusive cases:
$$
{\bf 1)}~\dim(\mathfrak{a})-\dim(\widetilde{\mathfrak{a}})\geq 2,\quad
{\bf 2)}~\dim(\mathfrak{a})-\dim(\widetilde{\mathfrak{a}})=1, \quad
{\bf 3)}~\dim(\mathfrak{a})=\dim(\widetilde{\mathfrak{a}}).
$$

{\bf Case 1)}. Choose a subspace $\mathfrak{b} \subset \mathfrak{a}$ such that
$\mathfrak{a}=\widetilde{\mathfrak{a}} \oplus \mathfrak{b}$. Then for every $X\in \mathfrak{b}$ the operator
$\ad(X)$ is not skew-symmetric and by Proposition \ref{kossim} we get $\Ric(X,X) <0$.
Since $\dim (\mathfrak{b}) \geq 2$, then the operator $\Ric$ has at least two  negative eigenvalues
(see Corollary \ref{dopmat}).
\smallskip

{\bf Case 2)}. We can choose a basis $\{f_1,f_2,\dots, f_m\}$ such that $f_i \in \widetilde{\mathfrak{a}}$ for $i \geq 2$.
Since the operator $\ad(f_i)$, $2 \leq i \leq m$, is skew-symmetric, then the matrix
$A_i$ are skew-symmetric and $B_i=0$ for $i\geq 2$ (see (\ref{adad})). Then $B_1=0$ also.
By (\ref{nnteor}) we get
$ \Ric = \left( {{\begin{array}{*{20}c}
 R_1 \hfill & R_2 \hfill\\
 R'_2 \hfill & R_3 \hfill\\
\end{array} }} \right),$
where
$$
R_1 = \Ric^\mathfrak{n} + \frac{1}{2}[A_1,A'_1] - t A_1^s,
$$
$$
R_2 = -\frac{1}{2} \left( \sum\limits_{i=1}^{l}D'_iC_i \right), \quad
R_3 = -L,
$$
where $\Ric^\mathfrak{n}$ is the matrix of the Ricci operator of the metric Lie algebra
$(\mathfrak{n}, Q|_{\mathfrak{n}})$
in the basis $\{e_1,....,e_l\}$,
$L$ is a $(m \times m)$-matrix with elements $l_{pq}= \trace(A_p^sA_q^s)$.
Since $A_i$ is skew-symmetric for $i \geq 2$, then
$L=\diag \bigl(\trace(A_1^s\cdot A_1^s),0,0,\dots,0\bigr)$.

Now, let us consider $\widehat{\mathfrak{s}}=\Lin(\mathfrak{n},f_1 )\subset \mathfrak{s}$.
It is clear that $\widehat{\mathfrak{s}}$ is closed under the Lie multiplication $[\cdot,\cdot]$, i.~e.
$\widehat{\mathfrak{s}}$  is a subalgebra of  the Lie algebra $\mathfrak{s}$.
Supply it with the inner product $Q|_{\widehat{\mathfrak{s}}}$. Let $\widehat{\Ric}$ be a matrix
of the Ricci operator of metric Lie algebra $(\widehat{\mathfrak{s}},Q|_{\widehat{\mathfrak{s}}})$ in the basis
$\{e_1,e_2, \dots, e_l,f_1\}$. Using the formula (\ref{nnteor}) one more time, we see that
$\widehat{\Ric}$ is submatrix of $\Ric$ corresponding to rows and columns with the numbers $1,2,\dots,l,l+1$.
By Proposition \ref{VRazAlg1}, we have two possibilities
for the metric Lie algebra  $(\widehat{\mathfrak{s}},Q|_{\widehat{\mathfrak{s}}})$:

{\bf 2a)} the ideal $\mathfrak{n}$ is commutative and
the operator $\ad(f_1)|_{\mathfrak{n}}$ is trace-free, normal, but not skew-symmetric
with respect to $Q|_{\mathfrak{n}}$;

{\bf 2b)} the Ricci operator $\widehat{\Ric}$
of the metric Lie algebra $(\widehat{\mathfrak{s}},Q|_{\widehat{\mathfrak{s}}})$ has at least two negative eigenvalues.

If 2a) holds, then for the metric Lie algebra $({\mathfrak{s}},Q)$
the possibility (2) of Theorem~\ref{VRazAlg} holds.

If 2b) holds, then the matrix $\Ric$ has at least two negative eigenvalues, since this property has its submatrix
$\widehat{\Ric}$ (see Corollary \ref{dopmat}). Hence, we prove Theorem \ref{VRazAlg} in case~2).

Before studying the case 3), we prove the following

\begin{lem}\label{nilp}
If $\mathfrak{s}$ is a non-abelian nilpotent Lie algebra, then the cases 2a) and 3) are impossible
for the metric Lie algebra
$(\mathfrak{s},Q)$ .
\end{lem}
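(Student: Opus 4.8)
The plan is to use two facts about the nilpotent Lie algebra $\mathfrak{s}$: since $\mathfrak{s}$ is nilpotent, every operator $\ad(X)$, $X\in\mathfrak{s}$, is nilpotent, and a nilpotent operator of a Euclidean space that is normal must be the zero operator. Together these force the $\ad$-operators occurring in cases 2a) and 3) to vanish, which collapses the structure of $\mathfrak{s}$ to something that contradicts the defining properties of those cases.

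First I would record the auxiliary fact: if a normal operator $C$ of a Euclidean space is nilpotent, then $C=0$. Indeed, for normal $C$ we have $\|C(X)\|^2=\langle C'C(X),X\rangle=\langle CC'(X),X\rangle=\|C'(X)\|^2$, so $\ker C=\ker C'$; hence if $C^2(X)=0$ then $C(X)\in\ker C=\ker C'$, so $\|C(X)\|^2=\langle C'C(X),X\rangle=0$ and $C(X)=0$, i.e. $\ker C^2=\ker C$, and by induction $\ker C^k=\ker C$ for every $k\ge 1$; nilpotency of $C$ then makes $\ker C$ the whole space. (In the skew-symmetric case one could alternatively invoke Lemma~\ref{vnilp}: for a skew-symmetric nilpotent $L$ one gets $\trace(LL')=2\trace(L^sL^s)=0$, hence $L=0$.) Since $\mathfrak{s}$ is nilpotent, $\ad(X)$ is nilpotent for every $X\in\mathfrak{s}$, and so is its restriction to the ideal $\mathfrak{n}$ (which is $\ad$-invariant); consequently any $\ad(X)$, or any restriction $\ad(X)|_{\mathfrak{n}}$, that is normal with respect to $Q$ — in particular, skew-symmetric — must be zero.

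Now I would dispose of the two cases. If case 2a) occurred, then $\ad(f_1)|_{\mathfrak{n}}$ would be normal with respect to $Q|_{\mathfrak{n}}$ and, being the restriction of the nilpotent operator $\ad(f_1)$, also nilpotent; hence $\ad(f_1)|_{\mathfrak{n}}=0$, which is skew-symmetric, contradicting the requirement in 2a) that $\ad(f_1)|_{\mathfrak{n}}$ is not skew-symmetric. If case 3) occurred, then $\ad(X)$ would be skew-symmetric with respect to $Q$ for every $X\in\mathfrak{a}$, hence $\ad(X)=0$ for all $X\in\mathfrak{a}$, i.e. $[\mathfrak{a},\mathfrak{s}]=0$; since $\mathfrak{s}=\mathfrak{a}\oplus\mathfrak{n}$ this gives $\mathfrak{n}=[\mathfrak{s},\mathfrak{s}]=[\mathfrak{n},\mathfrak{n}]=\mathfrak{n}^1$, whence $\mathfrak{n}=\mathfrak{n}^1=\mathfrak{n}^2=\cdots=\mathfrak{n}^p=0$ and $\mathfrak{s}$ is abelian, contrary to the hypothesis. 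Hence both cases are impossible. The only point worth isolating is the remark that a nilpotent normal operator vanishes; everything else is bookkeeping.
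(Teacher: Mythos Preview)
Your proof is correct and follows essentially the same route as the paper's: both arguments rest on the observation that a nilpotent normal operator vanishes, use it to force $\ad(f_1)|_{\mathfrak{n}}=0$ in case 2a) (contradicting non-skew-symmetry) and $\ad(X)=0$ for all $X\in\mathfrak{a}$ in case 3) (forcing $\mathfrak{n}=[\mathfrak{n},\mathfrak{n}]$, impossible for a nonzero nilpotent ideal). You supply a proof of the auxiliary fact about normal nilpotent operators which the paper merely invokes, but otherwise the arguments coincide.
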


\begin{proof} At first, prove that the case 2a) is impossible.
Suppose the contrary. Then (in the above notations)
the operator $\ad(f_1)|_{\mathfrak{n}}$ is trace-free and normal, but not skew-symmetric.
On the other hand, it is nilpotent, but
the only nilpotent normal operator is the zero operator.
We get a contradiction, since $\ad(f_1)|_{\mathfrak{n}}$ is not skew-symmetric.

Now, we prove that the case 3) is impossible.
Suppose the contrary. Every operator $\ad (f_i)$ are
both skew-symmetric (hence, normal) and nilpotent. Therefore, $\ad (f_i)$ is the zero operator for all $i=1,2,\dots,l$,
and $\mathfrak{a}$ lies in the center of the Lie algebra $\mathfrak{s}$. Hence,
$$
\mathfrak{n}=[\mathfrak{s},\mathfrak{s}]=[\mathfrak{n}\oplus \mathfrak{a},\mathfrak{n}\oplus \mathfrak{a}]=
[\mathfrak{n},\mathfrak{n}].
$$
But this is impossible, since $\mathfrak{n}$ is nilpotent and $\mathfrak{n} \neq 0$.
\end{proof}

\begin{remark}\label{last}
Lemma \ref{nilp} implies that for a non-abelian nilpotent metric Lie algebra $(\mathfrak{s},Q)$,
 only the cases 1) and 2b) are possible.
In both this cases {\it the Ricci operator of $(\mathfrak{s},Q)$
has at least two negative eigenvalue.  This proves Theorem \ref{NilpAlg}}.
\end{remark}
\smallskip

Finally, we consider the {\bf case 3)}.
Since all operators $\ad(f_i)$, $1 \leq i \leq m$, are skew-symmetric, then (for all $i$) the matrix
$A_i$ is skew-symmetric, $B_i=0$ and $A_i^s=0$ (see~(\ref{adad})).
By (\ref{nnteor}) we get
$ \Ric = \left( {{\begin{array}{*{20}c}
 R_1 \hfill & R_2 \hfill\\
 R'_2 \hfill & R_3 \hfill\\
\end{array} }} \right),$
where
$$
R_1 = \Ric^\mathfrak{n},
\quad
R_2 = -\frac{1}{2} \left( \sum\limits_{i=1}^{l}D'_iC_i \right), \quad
R_3 = 0,
$$
and $\Ric^\mathfrak{n}$ is the matrix of the Ricci operator of the metric Lie algebra
$(\mathfrak{n}, Q|_{\mathfrak{n}})$
in the basis $\{e_1,....,e_l\}$.

We have two possibilities: {\bf 3a)} $\mathfrak{n}$ is abelian; {\bf 3b)} $\mathfrak{n}$ is non-abelian.

If 3a) holds, then for the metric Lie algebra $({\mathfrak{s}},Q)$
the possibility (3) of Theorem~\ref{VRazAlg} holds.

If 3b) holds, then by Corollary \ref{dopmat} the matrix $\Ric$
has at least two negative eigenvalues, since this property has its submatrix
$\Ric^\mathfrak{n}$
(the latter is a statement of  Theorem~\ref{NilpAlg}, that we have proved in Remark \ref{last}).
Hence, we have proved Theorem \ref{VRazAlg} in full generality.

\vspace{10mm}

\bibliographystyle{amsunsrt}

\vspace{5mm}

\end{document}